\theoremstyle{plain}
\newtheorem{prop}{Proposition}[section]
\newtheorem{lemma}{Lemma}[section]
\newtheorem{theorem}{Theorem}[section]
\newtheorem{remark}{Remark}[section]
\numberwithin{equation}{section}
\newcommand{\Eb}{\mathbb{E}}
\newcommand{\vare}{\varepsilon}
\title{Maximum of the Gaussian interface model 
in random external fields}
\author{Hironobu Sakagawa
\thanks{
Department of Mathematics, 
Faculty of Science and Technology, Keio University, 
3-14-1 Hiyoshi, Kouhoku-ku, Yokohama 223-8522, JAPAN. 
{\it E-mail address\/}: sakagawa{\char'100}math.keio.ac.jp}}
\begin{document}

\date{}

\maketitle


\begin{abstract}
We consider the 
Gaussian interface model in the presence of 
random external fields, that is 
the finite volume (random) Gibbs measure 
on $\mathbb{R}^{\Lambda_N}$, 
$\Lambda_N=[-N, N]^d\cap \mathbb{Z}^d$ 
with Hamiltonian
$H_N(\phi)= 
\frac{1}{4d}\sum\limits_{x\sim y}(\phi(x)-\phi(y))^2
-\sum\limits_{x\in \Lambda_N}\eta(x)\phi(x)$ 
and $0$-boundary conditions. 
$\{\eta(x)\}_{x\in \mathbb{Z}^d}$ is 
a family of i.i.d. symmetric random variables. 
We study how the typical maximal height of a 
random interface is modified by the addition of 
quenched bulk disorder. 
We show that the asymptotic behavior of the maximum 
changes depending on the tail behavior of the 
random variable $\eta(x)$ when $d\geq 5$. 
In particular, we identify the leading order asymptotics 
of the maximum. 
\end{abstract}

\begin{center}{\small\bfseries Key words}
{\small Random interfaces, Gaussian free fields, 
disordered systems, entropic repulsion.} 
\end{center}

2020 Mathematics Subject Classification; 82B24, 82B44, 60K35. 


\section{Introduction}
\subsection{Model and result}

The study of the many 
equilibrium/non-equilibrium phenomena taking place at 
interfaces is an important and fascinating 
field in statistical mechanics. 
In particular, the probabilistic study of 
the effective interface model has been 
an active area of research for several decades. 
The main purpose of this paper is to study 
how the typical maximal height of a 
random interface is modified by the addition of 
quenched bulk disorder. 

First of all, we introduce our model. 
Let $d\geq 1$ and $\Lambda_{N}=[-N, N]^d\cap
\mathbb{Z}^d$.
For a configuration
$\phi=\{\phi(x)\}_{x\in \Lambda_N}\in \mathbb{R}^{\Lambda_N}$,
we consider the following massless Hamiltonian with quadratic 
interaction potential and $0$-boundary conditions: 
\begin{equation*}
H_N(\phi) =
\frac{1}{4d}\sum_{\substack{\{x, y\}\cap \Lambda_N\ne\phi \\ |x-y|=1}}
(\phi(x)-\phi(y))^2
\Big|_{\phi(x) \equiv 0 \text{ for every } x\in
\partial^+ \Lambda_N} ,
\end{equation*}
where
$\partial^+\Lambda_N\! =\{x\notin \Lambda_N; |x-y| = 1 \text{\! for some }
y\in \Lambda_N\}$ and 
we take the summation over nearest neighbor pairs 
$\{x,y\}\subset \mathbb{Z}^d\times\mathbb{Z}^d$ with $|x-y|=1$ 
which intersect $\Lambda_N$ and each pair counted only once. 
The corresponding finite volume Gibbs measure is defined by the following: 
\begin{equation}\label{00}
\mu_N(d\phi) = \frac{1}{Z_N}\exp\bigl\{
-H_N(\phi)\bigl\}
\prod\limits_{x\in \Lambda_N}d\phi(x), 
\end{equation}
where 
$d\phi(x)$ denotes Lebesgue measure on $\mathbb{R}$ and
$Z_N$ is a normalization factor. 
The configuration
$\phi$
is interpreted as an effective modelization of a random
phase separating interface
embedded in $d+1$-dimensional space.
The spin $\phi(x)$ at site $x\in \Lambda_{N}$
denotes the height of the interface. 

When the Hamiltonian is generally given by the form 
\begin{equation*}
\widetilde{H}_N(\phi) =
\sum_{\substack{\{x, y\}\cap \Lambda_N\ne\phi \\ |x-y|=1}}
V(\phi(x)-\phi(y))
\Big|_{\phi(x) \equiv 0 \text{ for every } x\in
\partial^+ \Lambda_N} ,
\end{equation*}
for the interaction potential $V:\mathbb{R}\to \mathbb{R}$, 
the model (\ref{00}) is called $\nabla\phi$ interface model or 
lattice massless field and its study 
has been quite active from both dynamical and statical aspects 
over the last few decades. 
In our case, by summation by parts we have 
\begin{align}\label{sbp}
H_N(\phi)
= \frac{1}{2}\left<\phi, (-\Delta_N)\phi\right>_{\Lambda_N}, 
\end{align}
where 
$\Delta_N$ is a discrete Laplacian on $\mathbb{Z}^d$ 
with Dirichlet boundary condition outside $\Lambda_N$, 
namely, 
\begin{equation*}
\Delta_N(x, y) = 
\begin{cases}
\frac{1}{2d} \ \ \text{ if } x, y\in \Lambda_N, |x-y|=1, \\
-{1} \ \ \text{ if } x, y \in \Lambda_N, x=y, \\
\, 0 \ \ \text{ otherwise. } 
\end{cases}
\end{equation*}
$\langle \ \cdot \ , \ \cdot \ \rangle_{\Lambda_N}$ 
denotes $l^2(\Lambda_N)$-scalar product. 
Therefore, $\mu_N$ coincides with the law of a centered Gaussian field 
on $\mathbb{R}^{\Lambda_N}$ whose covariance matrix is given by 
$(-\Delta_N)^{-1}$. 
This is often called lattice Gaussian free field. 
It is well-known that $(-\Delta_N)^{-1}$ has 
a random walk representation: 
\begin{equation*}
(-\Delta_N)^{-1}(x, y) = G_N(x, y):= 
{E}_x\Bigl[\sum\limits_{n=0}^\infty 
I(S_n =y, n<\tau_{\Lambda_N})\Bigr], 
\end{equation*}
where 
$\{S_n\}_{n\geq 0}$ is a simple random walk on $\mathbb{Z}^d$,  
${E}_x$ denotes expectation 
starting at $S_0=x\in \mathbb{Z}^d$ and 
$\tau_A= \inf\{ n\geq 0; S_n \notin A\}$ is the first exit time 
from $A\subset \mathbb{Z}^d$. 
By standard estimates on the Green function of 
the simple random walk (cf.\cite{L}), 
the following asymptotic behavior of the 
variance holds:  
\begin{equation}\label{var}
\mathrm{Var}_{\mu_N}(\phi(0))
= 
G_N(0, 0) \sim
\begin{cases}
g_1 N & \text{ if } d=1, \\ 
g_2 \log N & \text{ if } d=2, \\ 
G^*& \text{ if } d\geq 3, \\ 
\end{cases}
\end{equation}
for some $g_1, g_2>0$ 
and $G^* :=(-\Delta)^{-1}(0, 0)<\infty$ 
where 
$\Delta$ is a discrete Laplacian on $\mathbb{Z}^d$. 
$\sim$ means that the ratio of the both sides 
converges to $1$ in the limit $N\to\infty$. 
Under $\mu_N$, the interface is said to be delocalized when $d=1, 2$ 
because the variance diverges as $N\to\infty$. 
While, when $d\geq 3$ 
the interface is localized because the variance remains finite. 

One of the problems related to random interfaces 
is the study of the behavior of the field under the 
effect of various external potentials. 
For self-potentials 
$U_x:\mathbb{R}\to \mathbb{R}$, $x\in \Lambda_N$, 
the corresponding model is generally defined as follows: 
\begin{equation*}\label{square}
\mu_{N}^U (d\phi) = \frac{1}{Z_{N}^U}
\exp\bigl\{
-{H}_N (\phi) - \sum\limits_{x\in \Lambda_N} 
U_x(\phi(x)) \bigl\}
\prod\limits_{x\in {\Lambda_N}} d\phi(x), 
\end{equation*}
where 
${Z}_{N}^U$ is a normalization factor. 
Two most studied types of the potential are 
pinning: $U_x(r)= -bI(|r|\leq a)$, $a>0$, $b>0$ 
and hard wall: $U_x(r)=\infty \cdot I(r<0)$. 
See review articles \cite{F05} and \cite{V06} 
for the background and development of these problems. 
\cite{S09} and \cite{S21} studied the effect of 
weak repulsive potentials instead of the hard wall. 
Also, 
disordered pinning of the form 
$U_x(r)= (\beta \omega(x)+h)I(|r| \leq 1)$ 
has been studied in detail recently 
where $\beta, h\in \mathbb{R}$ and 
$\{\omega(x)\}_{x\in \mathbb{Z}^d}$ is a family 
of i.i.d. random variables. 
(cf.~\cite{CM13}, \cite{GL18}, \cite{GL22}, etc.)

In this paper, we consider 
self-potentials given by the form 
$U_x(r) = -\eta(x)r,  x\in \Lambda_N$ 
where $\eta = \{\eta(x)\}_{x \in \mathbb{Z}^d}$ is a 
family of independent random variables 
defined on some other probability space. 
This describes the situation that 
the space is filled by a (random) media changing at each site of 
$\Lambda_N$. 
Namely, chemical potentials $\eta$ 
represent the effect of the quenched bulk disorder. 
The corresponding (random) Gibbs measure is defined as follows: 
\begin{align}\label{gibbs}
\mu_N^{\eta}(d\phi) 
=\frac{1}{Z_N^{\eta}}
\exp\Bigl\{
-\frac{1}{4d}\sum_{\substack{\{x, y\}\cap \Lambda_N\ne\phi \\ |x-y|=1}}
(\phi(x)-\phi(y))^2
\Big|_{\phi(x) \equiv 0 \text{ for every } x\in
\partial^+ \Lambda_N} 
 + \sum\limits_{x\in \Lambda_N}\eta(x)\phi(x) \Bigr\}
\prod\limits_{x\in \Lambda_N}d\phi(x), 
\end{align}
where 
$Z_N^{\eta}$ is a normalization factor. 
We briefly recall several known results for this model. 
Recently, Dario, Harel and Peled \cite{DHP23} 
studied the fluctuation of the field. 
Under the condition that 
the interaction potential is symmetric and strictly convex, 
$\eta=\{\eta(x)\}_{x\in \mathbb{Z}^d}$ 
are independent and $\mathrm{Var}_{\mathbb{P}}(\eta(x))
=\sigma^2 \in (0, \infty)$ 
for every $x\in \mathbb{Z}^d$, 
they proved the following (cf. \cite[Theorem 2]{DHP23}): 
\begin{align}\label{dhp}
\mathrm{Var}_{\mathbb{P}}(E^{\mu_N^\eta}[\phi(0)]) 
\approx 
\begin{cases}
N^{4-d} & \text{ if } d=1, 2, 3, \\ 
\log N & \text{ if } d=4, \\ 
1 & \text{ if } d\geq 5, 
\end{cases}
\end{align}
where $\approx$ means that the ratio of the both sides 
stays uniformly positive and bounded in $N\geq 1$. 
Compared to (\ref{var}), 
the bulk disorder enhances the fluctuation of the field. 
Existence and uniqueness of random gradient Gibbs measures have 
been studied in \cite{CK12}, \cite{CK15}, \cite{vEK08}. 
\cite{D23} studied the convergence of the sequence of finite volume 
random Gibbs measures $\{\mu_N^\eta\}_{N\geq 1}$ to the
infinite volume translation-covariant state. 
See \cite{D23}, \cite{DHP23} and references theirin 
for the background of this model and other previously known results. 

The main objective of this paper is to analyze the asymptotic behavior of 
the maximum of the field in the presence of random external fields. 
We are interested in  how the typical maximal height of a 
random interface are modified by the addition of 
quenched bulk disorder. 
In relation to the study of log-correlated random fields, 
the study of the maximum of the two-dimensional 
discrete Gaussian free field is 
an active area of research (cf.~\cite{bib6} and references theirin). 
For the higher dimensional case, 
\cite{bib8} and \cite{bib9} studied the scaling limit. 
The typical maximum under $\mu_N$ behaves like $\sqrt{2dG^*\log N}$ 
when $d\geq 3$. 
This asymptotics is the same as that of 
the family of i.i.d. Gaussian random variables 
$\mathcal{N}(0, G^*)$ on $\mathbb{R}^{\Lambda_N}$. 
For the maximum of the $\nabla\phi$ model 
with non-Gaussian interaction potentials, 
we are only aware of the result by \cite{bib1} 
which identified the leading order of the maximum in two dimensions 
under the condition that the interaction potential is 
symmetric and strictly convex. 
We could not find any other result 
including the higher dimensional case. 

Now, we are in the position to state the main result of this paper. 
We assume that 
$\eta=\{\eta(x)\}_{x\in \mathbb{Z}^d}$ is a family of i.i.d. 
symmetric random variables which satisfies 
either of the following tail asymptotics:  
\begin{align*}
(A_\alpha) :
& 
\text{ there exist } \alpha \in (0, 2] \text{ and } c_\alpha \in (0, \infty) 
\text{ such that }
\lim\limits_{r\to \infty}\frac{1}{r^\alpha}\log 
\mathbb{P}(\eta(0) \geq r) 
= -c_\alpha, \\
(\widetilde{A}) :
& 
\lim\limits_{r\to \infty}\frac{1}{r^2}\log 
\mathbb{P}(\eta(0) \geq r) 
= -\infty. 
\end{align*}
Note that if $\eta(x)$ is a bounded random variable, 
then the case is included 
in the condition $(\widetilde{A})$. 
Throughout the paper below 
$\mathbb{P}$ denotes the probability measure over the random field 
$\eta$ and $\mathbb{E}$ denotes the corresponding expectation. 
We interpret the exponent $\alpha \wedge 2$ as $2$ 
under the condition $(\widetilde{A})$. 
\begin{theorem}\label{thm1}
Let $d\geq 5$ 
and $\eta=\{\eta(x)\}_{x\in \mathbb{Z}^d}$ be a family of i.i.d. 
symmetric random variables. 
Assume the condition $(A_\alpha)$ or $(\widetilde{A})$ 
and define $M^*$ as follows: 
\begin{align}\label{MM}
M^*= 
\begin{cases}
(\frac{d}{c_\alpha})^{\frac{1}{\alpha}} G^* 
& \text{ if }\ (A_\alpha)\ \alpha \in (0, 1], \\
(\frac{d}{c_\alpha})^{\frac{1}{\alpha}} 
(G^{*}_{(\alpha)})^{\frac{\alpha-1}{\alpha}} 
& \text{ if }\ (A_\alpha)\ \alpha \in (1, 2), \\
(2dG^*+
\frac{d}{c_2}{G}^{*}_{(2)})^{\frac{1}{2}} 
& \text{ if }\ (A_\alpha)\ \alpha=2, \\
({2dG^*})^{\frac{1}{2}}
& \text{ if }\ (\widetilde{A}), 
\end{cases}
\end{align}
where $G^*=(-\Delta)^{-1}(0, 0)$ and 
${G}^{*}_{(\alpha)} =\sum\limits_{x\in \mathbb{Z}^d}
 ((-\Delta)^{-1}(0, x))^{\frac{\alpha}{\alpha-1}}$. 
Then, for every $\vare, \vare' >0$ we have 
\begin{equation}\label{maxglb}
\lim\limits_{N\to\infty}
\mu_N^{\eta} \Bigl( \frac{1}{(\log N)^{\frac{1}{\alpha \wedge 2}}} \max_{x \in \Lambda_N} 
\phi(x) \geq M^*-\vare \Bigr) =1,\ 
\mathbb{P}\text{-a.s.},
\end{equation}
and
\begin{equation}\label{maxgub}
\lim_{N\to\infty}
\mathbb{P}\Bigl(
\mu_N^{\eta} \Bigl( \frac{1}{(\log N)^{\frac{1}{\alpha \wedge 2}}} \max_{x \in \Lambda_N} 
\phi(x) \leq M^* +\vare \Bigr) \geq 1-\vare' 
\Bigr)= 1. 
\end{equation}
Moreover, in the case $(A_\alpha)$ $\alpha=2$ and 
$2dG^* > \frac{G_{(2)}^*}{c_2}$, or $(\widetilde{A})$ 
we have 
\begin{equation}\label{maxgub2}
\lim\limits_{N\to\infty}
\mu_N^{\eta} \Bigl( \frac{1}{(\log N)^{\frac{1}{\alpha \wedge 2}}} \max_{x \in \Lambda_N} 
\phi(x) \leq M^*+\vare \Bigr) =1,\ 
\mathbb{P}\text{-a.s.}. 
\end{equation}

\end{theorem}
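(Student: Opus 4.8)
The plan is to decompose the field under $\mu_N^\eta$ into its quenched mean plus a centered Gaussian fluctuation, and to treat the two contributions to the maximum separately. Concretely, by completing the square in \eqref{gibbs}, under $\mu_N^\eta$ the field has the law of $\psi + m^\eta$, where $\psi$ is a centered lattice Gaussian free field with covariance $G_N = (-\Delta_N)^{-1}$ (independent of $\eta$), and $m^\eta(x) = \sum_{y\in\Lambda_N} G_N(x,y)\eta(y)$ is the quenched tilt. Thus $\max_x\phi(x) = \max_x (\psi(x) + m^\eta(x))$, and the whole problem reduces to understanding the interplay between the Gaussian maximum (order $\sqrt{2dG^*\log N}$) and the disorder-induced field $m^\eta$, whose one-point distribution has variance $\sum_y G_N(0,y)^2 \to G^*_{(2)}$ and, more importantly, heavy/light tails inherited from $\eta$ through the linear functional. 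The precise form of $M^*$ in \eqref{MM} should emerge from optimizing the cost of producing a large value of $m^\eta(x)$ at a single site against the entropy $\log|\Lambda_N|\sim d\log N$ of the choice of that site.

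**Lower bound \eqref{maxglb}.**
For the almost-sure lower bound it suffices to exhibit, $\mathbb P$-a.s., many sites $x$ where $m^\eta(x)$ is already close to $M^*(\log N)^{1/(\alpha\wedge2)}$ (for $\alpha\le 1$ this is the whole story; for $\alpha\ge 1$ one asks $m^\eta$ to reach a slightly smaller level and lets the Gaussian part $\psi$ supply the rest). The key input is a large-deviation estimate for the linear functional $m^\eta(x)=\sum_y G_N(x,y)\eta(y)$: under $(A_\alpha)$, $\mathbb P(m^\eta(x)\ge t)$ decays like $\exp(-c t^\alpha)$ for $\alpha\le1$ (a single large $\eta(y)$ dominates, with $y=x$ optimal since $G_N(x,x)\to G^*$), like $\exp(-c\, t^\alpha/(G^*_{(\alpha)})^{\alpha-1})$ for $1<\alpha<2$ (a spread-out deviation, optimized via Hölder/Lagrange over the profile $\eta(y)\propto G_N(x,y)^{1/(\alpha-1)}$), and Gaussian-like with variance $G^*_{(2)}$ when $\alpha=2$. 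I would apply this at a grid of well-separated sites so that the corresponding $m^\eta$-values are (nearly) independent, use Borel–Cantelli to get that at least one such site exceeds the target level $\mathbb P$-a.s., and then, conditionally on $\eta$, invoke a one-point Gaussian lower bound (or a standard second-moment / Slepian argument over a sparse sublattice) to control $\psi$ at that site. Combining the exponents of the $\eta$-tail with the $d\log N$ entropy reproduces each case of $M^*$; where $\psi$ is needed, the variances $2dG^*$ and the $\eta$-cost add, explaining the $\alpha=2$ formula $2dG^*+\tfrac{d}{c_2}G^*_{(2)}$ and the $(\widetilde A)$ formula $(2dG^*)^{1/2}$.

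**Upper bounds \eqref{maxgub} and \eqref{maxgub2}.**
For the upper bound one wants a union bound: $\mu_N^\eta(\max_x\phi(x)\ge M) \le \sum_{x\in\Lambda_N}\mu_N^\eta(\psi(x)\ge M-m^\eta(x))$, and since $\psi(x)$ is Gaussian with variance $G_N(x,x)\le G^*$, this is $\le \sum_x \exp(-(M-m^\eta(x))_+^2/(2G^*))$. One then needs, with $\mathbb P$-probability $\to1$ (for \eqref{maxgub}) or $\mathbb P$-a.s. (for \eqref{maxgub2}), uniform control on $\max_x m^\eta(x)$: an upper tail estimate plus a union bound over $x\in\Lambda_N$ shows $\max_x m^\eta(x) \le M^*_{\mathrm{dis}}(\log N)^{1/(\alpha\wedge2)}(1+o(1))$, where $M^*_{\mathrm{dis}}$ is the disorder-only constant. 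Feeding this back in and again using the entropy $|\Lambda_N|=N^{O(d)}$ gives \eqref{maxgub}; the competition between the Gaussian cost $M^2/(2G^*)$ and the disorder cost governs which case wins, and yields exactly $M^*$. The almost-sure upper bound \eqref{maxgub2} requires the stronger hypothesis $2dG^*>G^*_{(2)}/c_2$ (or $(\widetilde A)$) precisely because then the maximum is \emph{Gaussian-dominated}: the event is essentially a statement about $\max_x\psi(x)$ shifted by the typical, self-averaging field $m^\eta$, and concentration of $\max_x m^\eta(x)$ together with a Borel–Cantelli argument along a subsequence (plus monotonicity in $N$ to fill gaps) upgrades convergence in $\mathbb P$-probability to $\mathbb P$-a.s.

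**Main obstacle.**
The heart of the matter—and the step I expect to be hardest—is the sharp large-deviation asymptotics for the linear statistic $m^\eta(x)=\sum_y G_N(x,y)\eta(y)$ in the regime $1<\alpha<2$, where the optimal deviation is neither carried by one site nor Gaussian: one must show $\lim_{t\to\infty} t^{-\alpha}\log\mathbb P(\sum_y G_N(x,y)\eta(y)\ge t) = -c_\alpha (G^*_{(\alpha)})^{1-\alpha}$, uniformly enough in $N$ and $x$ to survive the union bound over $\Lambda_N$, using only the one-point tail hypothesis $(A_\alpha)$ on $\eta$. This demands a Gärtner–Ellis / Legendre-transform computation with weights $G_N(x,y)$ replaced in the limit by $(-\Delta)^{-1}(0,\cdot)$ (finiteness of $G^*_{(\alpha)}$ for $d\ge5$ is what makes $\alpha/(\alpha-1)$-summability hold), together with a matching lower bound by a tilting/change-of-measure argument, and careful truncation to handle the heavy tails. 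The rest—Gaussian one-point and union bounds, Borel–Cantelli, the second-moment lower bound for $\psi$—is comparatively standard.
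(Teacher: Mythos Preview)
Your decomposition $\phi \stackrel{d}{=} \psi + m^\eta$ and the large-deviation heuristics for $m^\eta(x)$ are correct and match the paper's starting point. But the upper-bound argument has a real gap in the case $(A_\alpha)$, $\alpha=2$, and this is precisely the delicate case.

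Your plan is to control $\max_x m^\eta(x) \le M^*_{\mathrm{dis}}\sqrt{\log N}$ uniformly and feed it back into the union bound. Doing so gives $\sum_x \exp\bigl(-(M-m^\eta(x))_+^2/(2G^*)\bigr) \le N^d \cdot N^{-(M-M^*_{\mathrm{dis}})^2/(2G^*)}$, which tends to $0$ only when $M > M^*_{\mathrm{dis}} + \sqrt{2dG^*}$. Since $M^*_{\mathrm{dis}} = \sqrt{dG^*_{(2)}/c_2}$, this threshold is strictly \emph{larger} than $M^* = \sqrt{2dG^* + dG^*_{(2)}/c_2}$, and you would not recover the sharp constant. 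The issue is over-counting: the sites where $m^\eta(x)$ is near its maximum number only $N^{o(1)}$, not $N^d$. The paper's remedy is a level-set decomposition: discretise $m^\eta$ into levels $j\theta\sqrt{\log N}$, prove via a concentration inequality (Lemma~\ref{mht}) that $\bigl|\{x: m^\eta(x)\ge a\sqrt{\log N}\}\bigr| \le N^{d-Ka^2+\varepsilon}$ $\mathbb P$-a.s., and then sum $N^{d-K(j\theta)^2}\cdot N^{-(b-j\theta)^2/(2G^*)}$ over $j$. The condition that every term vanish becomes $\min_{a\ge0}\{Ka^2 + (b-a)^2/(2G^*)\} > d$, whose solution is exactly $b>M^*$.

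Your lower-bound sketch has a parallel issue. ``Borel--Cantelli to get at least one such site $\ldots$ then a one-point Gaussian lower bound at that site'' cannot work: the $\mu_N$-probability that $\psi$ is large at a single prescribed site is $o(1)$. One needs $\sim N^{d-Ka^2}$ sites with $m^\eta \ge a\sqrt{\log N}$ and then a max-over-many-sites argument for $\psi$; optimising $a+\sqrt{2G^*(d-Ka^2)}$ again produces $M^*$. The paper instead rewrites $\mu_N^\eta(\max\phi\le b)$ as an entropic-repulsion probability for the annealed measure $\mathbb P\otimes\mu_N$, conditions on the boundaries of mesoscopic boxes to factorise $\psi$ via its spatial Markov property, and---crucially---replaces $m^\eta$ by the truncation $\widetilde m^\eta_{N,L}(x)=\sum_{y\in\Lambda_L(x)}G_N(x,y)\eta(y)$ so that the disorder also factorises on a sparse grid. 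This truncation (justified by Lemma~\ref{a1}(i)) is the device that produces the ``nearly independent'' structure you allude to, and it does real work.

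Finally, your account of \eqref{maxgub2} is off: there is no useful monotonicity of $\mu_N^\eta$ in $N$, and the distinction between \eqref{maxgub} and \eqref{maxgub2} is not that the maximum becomes ``Gaussian-dominated''. The actual obstruction (Remark~\ref{rem1}) is that the $\mathbb P$-a.s.\ upper bound on $\max_x m^\eta(x)$ is only $((d+1+\varepsilon)/K)^{1/\alpha}(\log N)^{1/\alpha}$, because $\mathbb P\bigl(\max_x m^\eta(x) \ge ((d+\varepsilon)/K)^{1/\alpha}(\log N)^{1/\alpha}\bigr)\to0$ but is not summable. When $2dG^* > 1/K$ one has $(M^*)^2 = 2dG^* + d/K > (d+1)/K$, so even these rare spikes of $m^\eta$ stay below any $b>M^*$ and the level-set bound closes $\mathbb P$-a.s.; otherwise they need not, and one only gets convergence in $\mathbb P$-probability.
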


\smallskip

\noindent
By this result we can see that the asymptotics of the maximum of the field 
changes depending on the tail behavior of the disorder variable $\eta$ 
when $d\geq 5$. 
In particular, we identified its leading order. 
We give several remarks about the result. 
\begin{remark}
$(\ref{maxglb})$ and $(\ref{maxgub2})$ imply that 
$\frac{1}{(\log N)^{\frac{1}{\alpha \wedge 2}}} \max\limits_{x \in \Lambda_N}
\phi(x) $ 
converges to $M^*$ in $\mu_N^{\eta}$-probability 
in the quenched sense, namely, 
\begin{equation*}
\lim\limits_{N\to \infty} 
\mu_N^{\eta} \Bigl( \bigl| \frac{1}{(\log N)^{\frac{1}{\alpha \wedge 2}}} 
\max_{x \in \Lambda_N} \phi(x) -M^* \bigr| \geq \vare \Bigr) =0, \ 
\mathbb{P}\text{-a.s.}, 
\end{equation*}
for every $\vare>0$. 
On the other hand, 
$(\ref{maxglb})$ and $(\ref{maxgub})$ imply 
the convergence in the annealed sense, namely, 
\begin{equation*}
\lim\limits_{N\to \infty} {\mathbb{E}} \Bigl[
\mu_N^{\eta} \Bigl( \bigl| \frac{1}{(\log N)^{\frac{1}{\alpha \wedge 2}}} 
\max_{x \in \Lambda_N} \phi(x) -M^* \bigr| \geq \vare \Bigr)\Bigr] =0. 
\end{equation*}
The reason why we have this slightly weaker result under the condition 
$(A_\alpha)$ $\alpha \in (0, 2)$, or $\alpha=2$ and 
$2dG^* \leq \frac{G_{(2)}^*}{c_2}$
is the existence of large spikes of the (random) mean height of the field. 
See Remark \ref{rem1} below for details. 
\end{remark}

\begin{remark}
The choice of the quadratic interaction potential 
$V(r)=\frac{1}{4d}r^2$ is for the identity (\ref{sbp}) 
and notational simplicity. 
The similar result holds for the case with 
general quadratic potential 
$V(r) = \lambda r^2$, $\lambda>0$ 
by a simple change of variables. 
\end{remark}
\begin{remark}
The condition $d\geq 5$ corresponds to the existence of 
the (random) infinite volume Gibbs measure $\mu_\infty^\eta$. 
Actually, the same statement to Theorem \ref{thm1} holds 
even if we replace $\mu_{N}^\eta$ with $\mu_\infty^\eta$.
The detail is given in Appendix A.1.
\end{remark}
\begin{remark}
In the lower dimensional case $1\leq d\leq 4$, 
fluctuation of the (random) mean height 
is much larger than that of the original $\phi$-field. 
Namely, it holds that 
$\mathrm{Var}_{\mathbb{P}}\bigl(E^{\mu_N^\eta}[\phi(x)]\bigr) \gg 
\mathrm{Var}_{\mu_N^\eta}(\phi(x))$, $x\in \Lambda_N$. 
Also, the field of the mean height 
$\{E^{\mu_N^\eta}[\phi(x)]\}_{x\in \Lambda_N}$ 
under $\mathbb{P}$ has long range correlations. 
By these reasons, the quenched result such as 
Theorem \ref{thm1} does not hold. 
We discuss an example in Appendix A.2. 
\end{remark}

\subsection{On the result and strategy of the proof}
\noindent
Next, we explain the strategy of the proof of Theorem \ref{thm1}. 
Firstly we have the following simple observation: 
\begin{align*}
H_N(\phi) - \sum\limits_{x\in \Lambda_N}\eta(x)\phi(x) 
& = \frac{1}{2}\left<\phi, (-\Delta_N)\phi\right>_{\Lambda_N}
-\left<\phi, \eta \right>_{\Lambda_N} \\
& = 
\frac{1}{2}\left<(\phi-m_N^\eta), (-\Delta_N)(\phi-m_N^\eta) \right>_{\Lambda_N}- 
\frac{1}{2}\left<m_N^\eta, (-\Delta_N)m_N^\eta \right>_{\Lambda_N}, 
\end{align*}
where 
$m_N^\eta=\{m_N^\eta(x)\}_{x\in \Lambda_N}$ is given by 
\begin{align*}
m_N^\eta(x)= ((-\Delta_N)^{-1}\eta)(x) = 
\sum\limits_{y\in \Lambda_N}G_N(x, y)\eta(y),\ x\in \Lambda_N.
\end{align*} 
Therefore, the Gibbs measure (\ref{gibbs}) coincides with 
the law of the Gaussian field on $\mathbb{R}^{\Lambda_N}$ 
with (random) mean $m_N^\eta$ and covariance $(-\Delta_N)^{-1}$. 
The bulk disorder $\eta$ affects only the mean height of the field. 
Under the condition that $\{\eta(x)\}_{x\in \mathbb{Z}^d}$ 
is a family of i.i.d. symmetric random variables, we have 
$\mathbb{E}[m_N^\eta(x)]= 0$ and 
$\mathbb{E}[m_N^\eta(x) m_N^\eta(y)] = \sigma^2 (-\Delta_N)^{-2}(x, y)$ 
for every $x, y\in \Lambda_N$ where 
$\sigma^2 = \mathrm{Var}_{\mathbb{P}}(\eta(0))$ 
and $(-\Delta_N)^{-2}= (-\Delta_N)^{-1}(-\Delta_N)^{-1}$ denotes 
usual matrix product. 
Namely, 
$m_N^\eta$-field has long range correlations and 
its covariance structure is the same as the so-called $\Delta\phi$ model, 
or Gaussian membrane model 
(cf. \cite{CDH19}, \cite{K12}, \cite{S03}, etc.). 
On the other hand, 
different from the $\nabla\phi$ model or $\Delta\phi$ model, 
$m_N^\eta$-field 
does not have spatial Markov property (Gibbs-Markov property). 
For the asymptotics of $(-\Delta_N)^{-1}$, 
the following is well-known: 
\begin{align*}
(-\Delta_N)^{-1}(0, x) = 
\begin{cases}
N+1-|x| \ \ \text{ if } d=1, \\
C_2(\log N-\log (|x|+1) )+O(|x|^{-1}) \ \ \text{ if } d=2, \\
C_d(|x|^{2-d}-N^{2-d})+O(|x|^{1-d}) \ \ \text{ if } d\geq 3, 
\end{cases}
\end{align*}
for some constant $C_d >0$ 
(cf.~\cite[Proposition 1.5.9 and Proposition 1.6.7]{L}). 
In particular, for $d\geq 3$ we have 
\begin{align*}
(-\Delta_N)^{-2}(0, 0) = \sum\limits_{x\in \Lambda_N}
((-\Delta_N)^{-1}(0, x))^2 
\sim C\sum\limits_{r=1}^N r^{d-1}\bigl(\frac{1}{r^{d-2}}\bigr)^2 
= C\sum\limits_{r=1}^N r^{-d+3}, 
\end{align*}
as $N\to \infty$. 
By the similar computation for $d=1, 2$, we obtain 
\begin{align*}
\mathrm{Var}_{\mathbb{P}} (m_N^\eta(0)) 
= \sigma^2 (-\Delta_N)^{-2}(0, 0) 
\approx 
\begin{cases}
N^{4-d} & \text{ if } d=1, 2, 3, \\ 
\log N & \text{ if } d=4, \\ 
1 & \text{ if } d\geq 5.  
\end{cases}
\end{align*}
as $N\to \infty$. 
Hence $m_N^\eta$-field is localized only if $d\geq 5$ 
and in the lower dimensional case $1\leq d\leq 4$, 
fluctuation of the (random) mean height under $\mu_N^\eta$ 
dominates that of the original $\phi$-field. 
Actually this is the same estimate as (\ref{dhp}) 
which was obtained by \cite{DHP23} 
for the general interaction potential case. 

By this observation, we study the asymptotic behavior 
of the random field 
determined from the weighted sum of i.i.d. random variables 
whose weights are given by the Green function of the simple random walk. 
We give the precise deviation estimates 
depending on the tail assumption of the disorder variable $\eta$ 
(see Lemma \ref{a1} below). 
In particular we can prove the following: 
\begin{itemize}
\item
Under the condition $({A}_\alpha)$ $\alpha \in (0, 2)$, 
the fluctuation of 
$\max\limits_{x\in \Lambda_N}m_N^\eta(x)$ under $\mathbb{P}$ 
dominates that of $\max\limits_{x\in \Lambda_N}\phi(x)$ under $\mu_N$. 
In particular, the asymptotics of 
$\max\limits_{x\in \Lambda_N}\phi(x)$ under $\mu_N^{\eta}$ 
is determined from that of 
$\max\limits_{x\in \Lambda_N}m_N^\eta(x)$ under $\mathbb{P}$. 
\item
Under the condition $(\widetilde{A})$, 
the fluctuation of 
$\max\limits_{x\in \Lambda_N}\phi(x)$ under $\mu_N$ 
dominates that of 
$\max\limits_{x\in \Lambda_N}m_N^\eta(x)$ under $\mathbb{P}$ and 
the asymptotics of 
$\max\limits_{x\in \Lambda_N}\phi(x)$ under $\mu_N^{\eta}$ 
is determined from that of 
$\max\limits_{x\in \Lambda_N}\phi(x)$ under $\mu_N$. 
\end{itemize}

On the other hand, 
the case $(A_\alpha)$ $\alpha=2$ is much more delicate. 
In this case the tail asymptotics of $\phi$-field under 
$\mu_N$ and $m_N^\eta$-field under $\mathbb{P}$ 
are the same order. 
For the moment we assume that 
$\eta = \{\eta(x)\}_{x\in \mathbb{Z}^d}$ is a family of i.i.d. 
normal random variables $\mathcal{N}(0, \sigma^2)$. 
Under the product measure 
$(\eta, \phi) \sim \mathbb{P}\otimes \mu_N$, 
the field $\{\phi(x)+m_N^{\eta}(x)\}_{x\in \Lambda_N}$ 
is a centered Gaussian field on $\mathbb{R}^{\Lambda_N}$ 
whose covariance matrix is given by 
$(-\Delta_N)^{-1}+ \sigma^2(-\Delta_N)^{-2}$. 
In particular $\lim\limits_{N\to \infty} 
\mathrm{Var}_{\mathbb{P}\otimes \mu_N}
(\phi(0)+m_N^\eta(0))=
G^* + \sigma^2 G^*_{(2)}(<\infty \text{ if } d\geq 5)$. 
Now, recall the fact that 
the maximum of the correlated Gaussian field 
$\mathcal{N}(0, (\Delta_N)^{-1})$ on $\mathbb{R}^{\Lambda_N}$ 
shows the similar asymptotic behavior as 
that of the 
the family of i.i.d. Gaussian random variables 
$\mathcal{N}(0, G^*)$ on $\mathbb{R}^{\Lambda_N}$ when 
$d\geq 3$ where 
$G^*= \lim\limits_{N\to \infty}
\mathrm{Var}_{\mu_N}(\phi(0))$. 
By this fact we can guess that 
$\max\limits_{x\in \Lambda_N}\{\phi(x)+m_N^\eta(x)\}$ 
behaves as 
$\sqrt{2d(G^*+ \sigma^2 G^*_{(2)})\log N}$ 
and  actually this coincides with the result of 
Theorem \ref{thm1} 
including the precise constant (\ref{MM}). 
Of course this is a simple observation for the 
annealed Gaussian measure and 
this argument does not directly lead the result for the 
quenched case or general disorder $\eta$ 
which satisfies the condition $(A_\alpha)$ $\alpha=2$. 

The actual strategy of the proof of Theorem \ref{thm1} is as follows. 
For every $a\in \mathbb{R}$ we have 
\begin{align*}
\mu_N^{\eta}\bigl( 
\max\limits_{x\in \Lambda_N} \phi(x) \leq a \bigr) 
& = 
\mu_N\bigl( \max\limits_{x\in \Lambda_N} 
\{\phi(x) + m_N^\eta(x)\} \leq a \bigr)\\
& = 
\mu_N\bigl( \phi(x) \geq m_N^\eta(x)-a \text{ for every } 
x\in \Lambda_N \bigr). 
\end{align*}
Hence, the behavior of the maximum under the measure $\mu_N^{\eta}$ 
closely related to 
the probability that 
the Gaussian free field stays above a random wall. 
This problem has been studied by \cite{bib3} 
for the i.i.d. random wall case and by \cite{bib4} 
for the case that the wall itself is also the Gaussian free field. 
In our case, though the field 
$\{m_N^\eta(x); x \in \Lambda_N\}$ has long range correlations and 
does not have the spatial Markov property, 
we can show that the field 
$\{m_N^\eta(x); x \in \Lambda_N \cap 4L\mathbb{Z}^d\}$ 
can be treated as an independent random field for 
fixed $L$ large enough. 
Then, for the lower bound of the maximum, 
we can proceed the quenched-annealed comparison and 
the conditioning argument which have 
been developed in the study of the problem of entropic repulsion 
(cf.~\cite{bib3}, \cite{bib4}, \cite{S09}, etc.). 
The upper bound of the maximum is given 
by an FKG argument combining with the precise estimate 
on the number of large spikes for the $m_N^\eta$-field. 
In all the arguments the precise deviation estimates 
of the mean height play the crucial role. 


\subsection{Discussion}
Here we discuss some further research directions. 

\medskip

\noindent
{\em The non-Gaussian case}: 
The generalization of the interaction potential 
would be natural and extremely interesting. 
Let $\widetilde{\mu}_{N}^\eta$ be the finite volume 
(random) Gibbs measure on $\mathbb{R}^{\Lambda_N}$ 
corresponding to the Hamiltonian: 
\begin{align}\label{ham}
\widetilde{H}_N^\eta(\phi) =
\sum_{\substack{\{x, y\}\cap \Lambda_N\ne\phi \\ |x-y|=1}}
V(\phi(x)-\phi(y))
\Big|_{\phi(x) \equiv 0 \text{ for every } x\in
\partial^+ \Lambda_N}
- \sum\limits_{x\in \Lambda_N}\eta(x)\phi(x). 
\end{align}
We expect that under the assumption that the interaction 
potential $V$ is symmetric and strictly convex, 
the maximum depends on the tail asymptotics 
of the disorder $\eta$ and 
the similar behavior to Theorem \ref{thm1} holds 
(at least about the order of the maximum). 
So we consider the same storyline of 
the proof for the Gaussian case. 

Set $\widetilde{m}_N^\eta(x):= 
E^{\widetilde{\mu}_N^\eta}[\phi(x)]$, 
$x\in \Lambda_N$ and we decompose the $\phi$-field as 
$\phi(x)= 
(\phi(x)-\widetilde{m}_N^\eta(x)) + \widetilde{m}_N^\eta(x)$. 
By Brascamp-Lieb inequality (cf. \cite[Section 4.2]{F05}) 
the fluctuation of the field 
$\{\phi(x)-\widetilde{m}_N^\eta(x)\}_{x\in \Lambda_N}$ 
is comparable to that of the discrete Gaussian free field. 
For the mean height $\widetilde{m}_N^\eta(x)$ we have 
a random walk representation recently studied by 
\cite{DHP23} and 
$\widetilde{m}_N^\eta(x)$ can be represented as 
$\widetilde{m}_N^\eta(x)=\sum\limits_{y\in \Lambda_N}
G_N^\eta(x, y) \eta(y)$, $x\in \Lambda_N$ 
where 
$G_N^\eta(x, y)$ corresponds to the 
Green function of a random walk 
in dynamic random environment 
whose environment is determined from the 
Langevin equation associated with the Hamiltonian (\ref{ham}). 
\cite{DHP23} also proved the Nash-Aronson type estimate 
for the heat kernel of the random walk. 
Then, by using their results we can obtain the following quenched estimate: 
when $d\geq 3$, there exist $C_1, C_2>0$ such that 
$$\displaystyle 
\frac{C_1}{|x-y|^{d-2}\vee 1} 
\leq G_N^\eta(x, y) 
\leq 
\frac{C_2}{|x-y|^{d-2}\vee 1}, \ x, y\in \Lambda_N. 
$$
(For the lower bound we need some conditions for $x, y\in \Lambda_N$; 
see \cite[Section 5.1]{DHP23} for the precise statement.) 

Therefore, similarly to the Gaussian case, 
the mean height can be represented as a 
weighted sum of 
$\{\eta(x)\}_{x\in \Lambda_N}$ and 
its weights $\{G_N^\eta(x, y)\}_{x, y\in \Lambda_N}$ 
are comparable to the Green function of the simple random walk. 
However, the main difficulty is that 
the weights depend on $\{\eta(x)\}_{x\in \Lambda_N}$ and 
we do not know the dependence explicitly. 
Hence we cannot carry out the argument of the proof of 
Lemma \ref{a1} below for the non-Gaussian case and 
we don't have the control of the fluctuation of 
the field $\{\widetilde{m}_N^\eta(x)\}_{x\in \Lambda_N}$. 
This is the reason why we have not been able to prove the corresponding result 
for the non-Gaussian case. 

\medskip


\noindent 
{\em Scaling limit of the maximum}: 
For the case without disorder, 
\cite{bib8} and \cite{bib9} proved that 
the field belongs to the maximal domain 
of attraction of Gumbel 
distribution when $d\geq 3$, 
namely the following holds 
for an appropriate choice of $a_N$ and $b_N$. 
\begin{align}\label{gumb}
\lim\limits_{N\to \infty} 
\mu_N \bigl( 
\frac{1}{a_N}(\max\limits_{x\in \Lambda_N}\phi(x) -b_N) 
\leq z 
\bigr)
= \exp\{-e^{-z}\},\ z\in \mathbb{R}. 
\end{align}
Though the corresponding problem may be considered 
for the case with disorder, 
the proofs of \cite{bib8} and \cite{bib9} do not work well 
under the quenched measure $\mu_N^\eta$ because they 
heavily rely on the precise Gaussian computation 
such as Mills ratio or the estimate on the joint probability 
of two random variables and 
we cannot carry out such computation 
without the detailed information about $m_N^\eta(x)$. 
The convergence of the form (\ref{gumb}) essentially requires 
the identification of the second leading term of 
$\max\limits_{x\in \Lambda_N}\phi(x)$ 
and it seems to be quite difficult under $\mu_N^\eta$. 
Generally speaking, we don't know whether 
(\ref{gumb}) 
holds or not for a Gaussian field on $\mathbb{R}^{\Lambda_N}$ 
with spatially inhomogeneous mean which depends on the 
system size $N$. 

%
%


On the other hand, the convergence of the following 
annealed form might be possible: 
\begin{align*}
\mathbb{E}\Bigl[
\mu_N^\eta\bigl( \frac{1}{a_N}
({\max\limits_{x\in \Lambda_N}\phi(x) -b_N}) 
\leq z \bigr)\Bigl] 
& = 
\mathbb{P}\otimes 
\mu_N\Bigl( 
\frac{1}{a_N}({\max\limits_{x\in \Lambda_N}
\{\phi(x)+m_N^\eta(x)\} -b_N}) 
\leq z \Bigr). 
\end{align*}
If 
$\eta = \{\eta(x)\}_{x\in \mathbb{Z}^d}$ is a family of i.i.d. 
normal random variables $\mathcal{N}(0, \sigma^2)$, 
then the field $\{\phi(x)+m_N^{\eta}(x)\}_{x\in \Lambda_N}$ 
is a centered Gaussian field on $\mathbb{R}^{\Lambda_N}$ 
whose covariance matrix is given by 
$(-\Delta_N)^{-1}+ \sigma^2(-\Delta_N)^{-2}$ 
under the product measure 
$(\eta, \phi) \sim \mathbb{P}\otimes \mu_N$ and 
we can apply the result of \cite{bib8} when $d\geq 5$. 
The generalization of this problem 
leads the following mathematical problem: 
let $\phi=\{\phi(x)\}_{x\in \Lambda_N}$ be the 
discrete Gaussian free field on $\mathbb{R}^{\Lambda_N}$ and 
$\psi=\{\psi(x)\}_{x\in \Lambda_N}$ be a random field on 
$\mathbb{R}^{\Lambda_N}$ independent of $\phi$. 
Then, 
identify $\max\limits_{x\in \Lambda_N}\{\phi(x)+\psi(x)\}$ 
(and its scaling limit if possible).


\bigskip

In the rest of the paper,
we provide the proof of Theorems \ref{thm1} 
in Sections 2 and 3. 
In Section 2 we give deviation estimates on 
weighted sums of i.i.d. random variables 
which are needed for the study of the asymptotic behavior of 
the (random) mean height. 
The main part of the proof of Theorem \ref{thm1} 
is given in Section 3. 
In Appendix, we discuss the infinite volume case and the 
lower dimensional case. We also introduce the result for 
the problem of entropic repulsion. 
Throughout the paper $C$, $C'$, $C''$ represent positive constants 
that do not depend on the size of the system $N$, 
but may depend on other parameters. 
These constants in various 
estimates may change from place to place in the paper. 

\section{Deviations of weighted sums of i.i.d. random variables }
In this section we give deviation estimates 
of weighted sums of i.i.d. random variables 
whose weights are given by the Green function of the simple random walk. 
Let $G_N: \Lambda_N \times \Lambda_N \to \mathbb{R}$ be 
the Green function of the simple random walk with Dirichlet 
boundary conditions outside $\Lambda_N$. 
When $d\geq 3$, 
$G_N(x, y) \uparrow G(x, y):= (-\Delta)^{-1}(x, y)<\infty$ 
as $N\to \infty$ for every $x, y\in \mathbb{Z}^d$. 
In particular, we have 
$G_N(x, y) \leq G(0, y-x)\leq G(0, 0)=:G^*$ 
for every $x, y \in \Lambda_N$. 
We also know the following asymptotic behavior for $G(x, y)$. 

\smallskip

\begin{prop}\cite[Theorem 1.5.4]{L}\label{gfunc}
Let $d\geq 3$. 
As $|x-y| \to \infty$, 
$G(x, y)= G(0, y-x) \sim \frac{a_d}{|x-y|^{d-2}}$, where 
$a_d = \frac{d}{2}\Gamma(\frac{d}{2}-1)\pi^{-\frac{d}{2}}$. 
\end{prop}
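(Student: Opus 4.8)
The plan is to prove the asymptotics directly from the representation $G(0,x)=\sum_{n\ge 0}p_n(0,x)$, where $p_n(0,x)=P_0(S_n=x)$ is the $n$-step transition probability of the simple random walk, by inserting the local central limit theorem and then recognizing the resulting sum as a Riemann approximation to a Gaussian integral. First I would record that, since the simple random walk on $\Z^d$ is periodic of period $2$, only the terms with $n\equiv\|x\|_1\pmod 2$ contribute, and for those the local central limit theorem with error estimates (see \cite{L}) gives $p_n(0,x)=2\bar p_n(x)+\text{error}$, where $\bar p_n(x)=(2\pi n/d)^{-d/2}\exp\!\big(-d|x|^2/(2n)\big)$ is the Gaussian heat kernel attached to the one-step covariance $\tfrac1d I$, the correction being $O(n^{-(d+2)/2})$ uniformly on the range $|x|\le\vare n$, with a crude Gaussian bound $p_n(0,x)\le C\exp(-c|x|^2/n)$ available in all regimes.

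Next I would localize the sum. Writing $r=|x|$, the factor $\exp(-dr^2/(2n))$ makes the contribution of $n\le r^2/\log r$ super-polynomially small, while for $n\ge r^2\log r$ one uses $\bar p_n(x)\le (2\pi n/d)^{-d/2}$ together with $\sum_{n\ge r^2\log r}n^{-d/2}=O\big((r^2\log r)^{1-d/2}\big)=o(r^{2-d})$ — here the hypothesis $d\ge 3$ enters — so that tail is negligible; the bulk of the sum comes from $n\asymp r^2$. On that bulk range I would replace $\sum_{n\equiv\|x\|_1\,(2)}2\bar p_n(x)$ by $\int_0^\infty\bar p_t(x)\,dt$, the periodicity factor $2$ in the local CLT exactly cancelling the spacing $2$ of the admissible values of $n$; the replacement error is controlled by summing the $O(n^{-(d+2)/2})$ correction over $n\asymp r^2$, which is $O(r^{-d})=o(r^{2-d})$, together with the modulus of continuity of $t\mapsto\bar p_t(x)$ on that scale.

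It then remains to evaluate $\int_0^\infty(2\pi t/d)^{-d/2}\exp(-dr^2/(2t))\,dt$. The substitution $s=dr^2/(2t)$ turns this into $\frac{d}{2}\pi^{-d/2}r^{2-d}\int_0^\infty s^{d/2-2}e^{-s}\,ds=\frac{d}{2}\Gamma\!\big(\tfrac d2-1\big)\pi^{-d/2}\,r^{2-d}$, the integral converging precisely because $d\ge 3$; this is exactly $a_d/|x|^{d-2}$. Since every error term collected above is $o(r^{2-d})$, we conclude $G(0,x)\sim a_d/|x|^{d-2}$ as $|x|\to\infty$. (An alternative route is to read off the singularity of the Fourier multiplier $1/(1-\widehat p(\theta))$ near $\theta=0$ and recognize the Fourier transform of $|\theta|^{-2}$ on $\R^d$, but the oscillatory-integral bookkeeping there is at least as heavy.)

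The main obstacle is the uniform control of the local CLT error over the dominant range $n\asymp|x|^2$: one needs the Gaussian approximation not just up to a constant factor but with an additive error that sums to $o(|x|^{2-d})$, which forces the quantitative form of the LCLT (the $O(n^{-(d+2)/2})$ correction plus the Gaussian tail bound) rather than the bare statement, and it requires cutting the range of $n$ at the correct scale so that neither the small-$n$ large-deviation regime nor the polynomially-decaying $n\gg|x|^2$ tail contaminates the leading order. A smaller but genuine subtlety is matching the periodicity factor $2$ with the step-$2$ arithmetic progression over which $n$ ranges, so that the continuum limit is $\int_0^\infty\bar p_t(x)\,dt$ and not twice that.
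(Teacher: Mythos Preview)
The paper does not prove this proposition at all: it is quoted verbatim as Theorem~1.5.4 of Lawler's book \cite{L} and used as a black box throughout. Your sketch is a faithful outline of the classical argument (and is essentially the one Lawler gives): represent $G(0,x)=\sum_{n\ge 0}p_n(0,x)$, feed in the local CLT with quantitative error, localize the sum to $n\asymp |x|^2$, pass to the continuum integral $\int_0^\infty \bar p_t(x)\,dt$, and evaluate that integral explicitly by the substitution $s=d|x|^2/(2t)$ to produce $a_d|x|^{2-d}$. The handling of the period-$2$ bipartite structure and the matching of the factor $2$ with the step-$2$ spacing is correct, and the Gamma integral computation is right.

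One small point of care in your tail bookkeeping: for the range $n\le |x|^2/\log|x|$ the naked bound $p_n(0,x)\le C\exp(-c|x|^2/n)$ that you mention is not by itself summable to $o(|x|^{2-d})$ in high dimension; you need the sharper form $p_n(0,x)\le Cn^{-d/2}\exp(-c|x|^2/n)$ (which is what the quantitative LCLT actually gives on that range), or alternatively cut at $n=\varepsilon|x|^2$ and send $\varepsilon\downarrow 0$ after $|x|\to\infty$. With that adjustment the argument goes through.
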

For 
$\{\eta(x)\}_{x\in \mathbb{Z}^d} 
\in \mathbb{R}^{\mathbb{Z}^d}$, 
we define a weighted sum 
$S_A(x)=
\sum\limits_{y\in A}G_N(x, y)\eta(y)$, 
$A\subset \Lambda_N$, 
$x\in \mathbb{Z}^d$. 
$\Lambda_L(x)=x+\Lambda_L$ denotes a box centered at 
$x \in \mathbb{Z}^d$ and side length $2L+1$. 
In the following 
we denote the event $\bigcup\limits_{N=1}^\infty 
\bigcap\limits_{k=N}^\infty \mathcal{A}_k$ as 
$\{ \mathcal{A}_N, \text{ for all large } N \}$ 
for a sequence of events $\{\mathcal{A}_N\}$. 

\smallskip

\begin{lemma}\label{a1}
Let $d\geq 5$ and 
$\eta=\{\eta(x)\}_{x\in \mathbb{Z}^d}$ be a family of i.i.d. 
symmetric random variables. 
\begin{enumerate}[$(i)$]
\item
Assume that $\eta$ satisfies the condition $(A_\alpha)$. 
Then, for every $\delta>0$ there exists 
$L_0=L_0(\delta)\in \mathbb{N}$ such that the following holds for 
every $L \geq L_0$: 
\begin{equation*}
\mathbb{P}\bigl( 
|S_{\Lambda_N\cap (\Lambda_{L}(x))^c}(x)| \leq 
(\delta \log N)^{\frac{1}{\alpha}} \text{ for every } 
x \in \Lambda_{N}, \text{ for all large } N 
\bigr)=1.
\end{equation*}
\item
Let $L \in \mathbb{N}$ be fixed. 
Assume that $\eta$ satisfies the condition $(A_\alpha)$. 
Then, 
the following holds for every $x\in \Lambda_{N-L}$, 
$\vare>0$, $b>0$ 
and $N\in \mathbb{N}$ large enough: 
\begin{equation*}
\mathbb{P}\bigl( 
|S_{\Lambda_L(x)}(x)| \geq (b \log N)^{\frac{1}{\alpha}} 
\bigr) 
\begin{cases}
\leq N^{-(K - \vare) b}, \\
\geq N^{-(K + \vare) b}. 
\end{cases}
\end{equation*}
where $K$ is given by 
\begin{equation}\label{constK}
K = 
\begin{cases}
\frac{c_\alpha}{(G^*)^\alpha} &
\text{ if } \alpha\in (0, 1], \\
\frac{c_\alpha}{(G_{L, (\alpha)}^*)^{\alpha-1}} & 
\text{ if } \alpha\in (1, 2], 
\end{cases}
\end{equation}
$G^*=(-\Delta)^{-1}(0, 0)$ and 
$G_{L, (\alpha)}^* = \sum\limits_{x\in \Lambda_L} 
((-\Delta)^{-1}(0, x))^{\overline{\alpha}}$, 
$\overline{\alpha}=\frac{\alpha}{\alpha-1}$. 
\item
Assume that $\eta$ satisfies the condition $(\widetilde{A})$. 
Then, for every $\delta>0$ we have 
\begin{equation*}
\mathbb{P}\bigl( 
|S_{\Lambda_N}(x)| \leq (\delta 
\log N)^{\frac{1}{2}} \text{ for every } 
x \in \Lambda_{N}, \text{ for all large } N 
\bigr)=1.
\end{equation*}
\end{enumerate}
\end{lemma}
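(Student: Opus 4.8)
\noindent\emph{Plan of proof.}
The three estimates share a template: reduce to a tail bound for one weighted sum $S_A(x)$, union over the polynomially many $x\in\Lambda_N$, and apply Borel--Cantelli. The hypothesis $d\ge5$ enters through the finiteness of $\sum_{z\in\mathbb{Z}^d}G(0,z)^q$ for every $q\ge d/(d-2)$ --- in particular for $q=2$ and, when $\alpha\in(1,2]$, for the conjugate exponent $q=\bar\alpha=\alpha/(\alpha-1)$ --- so that, by Proposition \ref{gfunc}, $\gamma_L:=\sum_{|z|>L}G(0,z)^2$ and $\rho_L:=\sum_{|z|>L}G(0,z)^{\bar\alpha}$ tend to $0$ as $L\to\infty$, while $G_N(x,y)\le G(0,y-x)\le C(|x-y|^{2-d}\wedge1)$. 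The probabilistic input is the two-sided bound for $\eta$: under $(A_\alpha)$, $e^{-(c_\alpha+\vare)r^\alpha}\le\mathbb{P}(\eta(0)\ge r)\le e^{-(c_\alpha-\vare)r^\alpha}$ for $r$ large and $\mathbb{E}[e^{\lambda|\eta(0)|^\alpha}]<\infty$ for $\lambda<c_\alpha$; under $(\widetilde A)$, $\mathbb{P}(\eta(0)\ge r)\le e^{-Mr^2}$ for every $M$ and $\mathbb{E}[e^{\lambda|\eta(0)|^2}]<\infty$ for every $\lambda$.

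I would treat $(ii)$ first, since it is the local estimate that fixes the constant $K$. For the upper bound I optimize a Cram\'er/Chernoff bound for the finite sum $S_{\Lambda_L(x)}(x)$. When $\alpha\le1$, subadditivity of $r\mapsto r^\alpha$ gives $|S_{\Lambda_L(x)}(x)|^\alpha\le\sum_{y\in\Lambda_L(x)}G_N(x,y)^\alpha|\eta(y)|^\alpha$ (legitimate since the sum is finite), and with $\mathbb{E}[e^{\lambda|\eta|^\alpha}]<\infty$, $\lambda<c_\alpha$, this yields $\mathbb{P}(|S_{\Lambda_L(x)}(x)|\ge t)\le Ce^{-(K-\vare)t^\alpha}$ with the exponent governed by the largest weight $G_N(x,x)\to G^*$, i.e.\ $K=c_\alpha/(G^*)^\alpha$. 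When $\alpha\in(1,2]$ the cheapest deviation is distributed across the $\eta(y)$'s proportionally to $G_N(x,y)^{1/(\alpha-1)}$; H\"older duality with exponents $(\alpha,\bar\alpha)$ then produces the weight $G^*_{L,(\alpha)}$ and the exponent $K=c_\alpha/(G^*_{L,(\alpha)})^{\alpha-1}$. The matching lower bound comes from forcing a single site of near-maximal weight to carry the whole deviation and controlling the remaining finitely many terms by Chebyshev; this is where one works inside a macroscopic sub-box so that $G_N(x,\cdot)$ is close to its infinite-volume limit.

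For $(i)$ and $(iii)$ the tool is a Fuk--Nagaev-type inequality: for independent symmetric $\eta(y)$ and weights $w_y$,
\[
\mathbb{P}\Bigl(\Bigl|\sum_y w_y\eta(y)\Bigr|>t\Bigr)\ \le\ \sum_y\mathbb{P}\bigl(|w_y\eta(y)|>\tfrac12 t\bigr)\ +\ 2\exp\Bigl(-\frac{c\,t^2}{\sum_y w_y^2}\Bigr),
\]
valid in the relevant range of $t$ (in the heavy-tailed regime $\alpha<1$ one iterates this, in the manner of Hoffmann--J\o rgensen, to strip off the large terms down to the order-one scale). I apply it with $w_y=G_N(x,y)\mathbf{1}_{y\in\Lambda_N\cap\Lambda_L(x)^c}$ and $t=(\delta\log N)^{1/(\alpha\wedge2)}$ for $(i)$, and with $w_y=G_N(x,y)\mathbf{1}_{y\in\Lambda_N}$ and $t=(\delta\log N)^{1/2}$ for $(iii)$. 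Since $t^{\alpha\wedge2}\asymp\log N$, the Gaussian term is $\exp(-ct^2/\gamma_L)$ in case $(i)$ --- made $\le N^{-(\mathrm{anything})}$ by choosing $L$ large so $\gamma_L$ is small --- and in case $(iii)$ under $(\widetilde A)$ it is super-polynomially small in $N$: here $\mathbb{E}[e^{\lambda|\eta|^2}]<\infty$ for all $\lambda$ forces the log-Laplace transform of $\sum_y G_N(x,y)\eta(y)$ to be sub-quadratic (the series $\sum_y G_N(x,y)^2$ converges because $d\ge5$), so its tail beats every Gaussian, which is precisely why $m^\eta_N$ contributes nothing to leading order when $(\widetilde A)$ holds. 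For the ``one large term'' sum, every weight with $|x-y|>L$ is at most $CL^{2-d}$, so $\mathbb{P}(|w_y\eta(y)|>t/2)\le\exp(-\kappa\,(|x-y|^{d-2}t)^{\alpha\wedge2})$; summing first over $x\in\Lambda_N$ and then over $y$ (which collapses to a single $\sum_{|z|>L}$ and so loses no factor $|\Lambda_N|$) bounds the total by $\lesssim|\Lambda_N|\,L^{d-1}\,N^{-\kappa' L^{(\alpha\wedge2)(d-2)}\delta}$, summable over $N$ once $L=L_0(\delta)$ is large. Borel--Cantelli closes $(i)$ and $(iii)$.

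The technically delicate step is the heavy-tailed case $\alpha<1$ of $(i)$. A single application of Bennett's inequality to the truncated sum yields only a $(\log N)^{-\mathrm{const}}$ bound, because the truncation level and the target $t$ are both of order $(\log N)^{1/\alpha}$, and that is too weak to survive the union over $x\in\Lambda_N$. One must iterate the decoupling down to the $O(1)$ scale while summing the large-term probabilities over $x$ at every dyadic threshold $s$ with $\sqrt{\gamma_L}\le s\le t$, exploiting that the decay $e^{-\kappa L^{\alpha(d-2)}s^\alpha}$ coming from the smallness of the weights dominates $|\Lambda_N|$ uniformly in $s$ and that only $O(\log\log N)$ such scales occur; alternatively one invokes the sharp Nagaev inequality for stretched-exponential tails. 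Keeping this bookkeeping correct is the crux of the argument.
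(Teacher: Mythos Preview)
Your plan has a genuine gap in the lower bound of part~(ii) for $\alpha\in(1,2]$. You write that ``the matching lower bound comes from forcing a single site of near-maximal weight to carry the whole deviation,'' but for $\alpha>1$ this only yields $\mathbb{P}(S_{\Lambda_L(x)}(x)\ge b_N)\gtrsim N^{-c_\alpha b/(G^*)^\alpha}$. Since $(G^*)^\alpha=(G(0,0)^{\bar\alpha})^{\alpha-1}<(G^*_{L,(\alpha)})^{\alpha-1}$ for $L\ge1$, the exponent $c_\alpha/(G^*)^\alpha$ is strictly larger than $K=c_\alpha/(G^*_{L,(\alpha)})^{\alpha-1}$ and the single-site bound is too weak to give $N^{-(K+\vare)b}$. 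The correct lower bound, which the paper carries out, forces \emph{every} $\eta(y)$, $y\in\Lambda_L(x)$, to exceed $G_N(x,y)^{\bar\alpha-1}b_N\big/\sum_z G_N(x,z)^{\bar\alpha}$ simultaneously---precisely the distributed configuration you already identified as cheapest for the upper bound---and then multiplies the individual tail probabilities by independence; the tail assumption on $\eta$ converts this product directly into $N^{-(K+\vare)b}$. The single-site trick is sharp only when $\alpha\le1$.

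On methodology your route diverges from the paper's in two further places. For the upper bound in~(ii) with $\alpha\in(1,2]$ the paper does not argue via H\"older duality but invokes Kasahara's Tauberian theorem to obtain the sharp asymptotic $\log\mathbb{E}[e^{\lambda\eta(0)}]\sim C_0\lambda^{\bar\alpha}$ as $\lambda\to\infty$, after which a Cram\'er bound with an $N$-dependent $\lambda_N$ produces $K$ directly; your H\"older sketch is the Legendre dual of this computation but still needs the Laplace-transform asymptotic to be made rigorous. For~(i) the paper runs a single exponential Chebyshev bound in every regime, choosing $\lambda_N$ case by case; in particular for $\alpha<1$ it truncates once on the event $\{\max_y G_N(x,y)\eta(y)\le b_N\}$ and then takes $\lambda_N\asymp b_N^{\alpha-1}L^{(d-2)\alpha}\to0$ so that the truncated moment generating function is controlled by second moments (following~\cite{GHK06}), which avoids the dyadic Hoffmann--J{\o}rgensen iteration you anticipate. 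Note also that your stated Fuk--Nagaev inequality with truncation level $t/2$ and a pure Gaussian second term $\exp(-ct^2/\sum_y w_y^2)$ is not literally valid: Bernstein applied after truncation at height $t/2$ leaves a term of order $t^2$ in the denominator of the exponent, killing the Gaussian tail. You would need either the genuine Fuk--Nagaev form with a free truncation parameter $y\ll t$ or, more simply, the paper's direct Chernoff approach.
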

\noindent 
The proof of this lemma 
changes depending on the parameter $\alpha$. 
In the following, we set 
$b_N:= (b\log N)^{\frac{1}{\alpha \wedge 2}}$ 
for $b>0$. 
\begin{proof}[Proof of Lemma \ref{a1} for the case 
$(A_\alpha)$\ $0< \alpha <1$]
In this case $\eta$ does not have the exponential integrability. 
For the upper bound of the tail estimate we consider 
truncation and the exponential 
Chebyshev-type argument which was used in \cite{GHK06}. 
Let $\vare>0$ be small enough and take 
$A\subset \Lambda_N$ and $\lambda_N>0$. 
These are to be specified later on. 
For $x\in \Lambda_{N}$ and $b>0$, we have 
\begin{align*}
\mathbb{P}(S_A(x) \geq b_N) 
& \leq 
\mathbb{P}\bigl( 
\max_{y\in A}\{G_N(x, y)\eta(y)\} \leq b_N, S_A(x) \geq b_N
\bigr) + 
\mathbb{P}\bigl( 
\max_{y\in A}\{G_N(x, y)\eta(y)\} \geq b_N
\bigr) \\
& =: J_1(A) +J_2(A). 
\end{align*}

For the first term, 
\begin{align*}
J_1(A) 
& 
\leq 
e^{-\lambda_Nb_N} \mathbb{E}\Bigl[
\exp\bigl\{\lambda_N \sum_{y\in A} G_N(x, y)\eta(y) \bigr\}
I\bigl(
\max_{y\in A}\{G_N(x, y)\eta(y)\} \leq b_N
\bigr)
\Bigr]\\
& = 
e^{-\lambda_Nb_N} \prod\limits_{y\in A} 
\mathbb{E}\Bigl[
\exp\bigl\{\lambda_N G_N(x, y)\eta(y) \bigr\}
I\bigl( G_N(x, y)\eta(y) \leq b_N
\bigr)\Bigr]\\
& = 
e^{-\lambda_Nb_N} \prod\limits_{y\in A} 
\bigl\{ J_1^1 (y) + J_1^2(y) 
\bigr\},
\end{align*}
where 
\begin{align*}
J_1^1 (y) & = 
\mathbb{E}\Bigl[
\exp\bigl\{\lambda_N G_N(x, y)\eta(y) \bigr\}
I\bigl( \lambda_N G_N(x, y)\eta(y) \leq 1 \bigr)\Bigr], \\
J_1^2 (y) & = 
\mathbb{E}\Bigl[
\exp\bigl\{\lambda_N G_N(x, y)\eta(y) \bigr\}
I\bigl(1<\lambda_N G_N(x, y)\eta(y) \leq \lambda_N b_N \bigr)\Bigr]. 
\end{align*}
Since $e^t \leq 1 +t +t^2$ for every $t \leq 1$, 
we have 
\begin{align*}
J_1^1 (y) & \leq 
\mathbb{E}\Bigl[
\bigl\{ 1+ \lambda_N G_N(x, y)\eta(y) + 
\lambda_N^2G_N(x, y)^2 \eta(y)^2 \bigr\}
I\bigl( \lambda_N G_N(x, y)\eta(y) \leq 1 \bigr)\Bigr] \\
& \leq 
1 + \lambda_N^2 G_N(x, y)^2 \sigma^2 \\
& \leq 
\exp\bigl\{\lambda_N^2 G_N(x, y)^2 \sigma^2 \bigr\}, 
\end{align*}
where $\sigma^2= \mathrm{Var}_{\mathbb{P}}(\eta(0))$ and 
we used the assumption that the law of $\eta$ 
is symmetric for the second inequality. 
For $J_1^2(y)$, we use the following basic fact: 
for a random variable $X\geq 0$ and 
$0\leq a <b$, it holds that 
\begin{align*}
E[X I(a \leq X \leq b)] 
& = 
aP(X \geq a) -bP(X>b) + 
\int_{a}^{b} P(X\geq t) dt 
\\
& \leq 
aP(X \geq a)  + 
\int_{a}^{b} P(X\geq t) dt. 
\end{align*}
By this inequality and the tail assumption on $\eta$, we have 
\begin{align*}
J_1^2 (y) 
& \leq 
\int_{\frac{1}{\lambda_N}}^{b_N} \lambda_N e^{\lambda_N t} 
\mathbb{P} \bigl( G_N(x, y)\eta(y)\geq t \bigr) dt 
+ e \mathbb{P} \bigl( G_N(x, y)\eta(y) \geq \frac{1}{\lambda_N} 
\bigr) \\
& \leq 
\int_{\frac{1}{\lambda_N}}^{b_N} \lambda_N e^{\lambda_N t} 
e^{-(1-\vare)c_\alpha 
(\frac{t}{G_N(x, y)})^{\alpha}}dt 
+ e 
e^{-(1-\vare)c_\alpha 
(\frac{1}{\lambda_N G_N(x, y)})^{\alpha}}, 
\end{align*}
for every $N$ large enough 
where we take $\lambda_N$ so that 
$\lambda_N \to 0$ as $N\to \infty$.

\smallskip


Now we consider the case 
$A=\Lambda_N\cap (\Lambda_L(x))^c$. 
By Proposition \ref{gfunc} 
there exists a constant $G_+>0$ such that 
$G_N(x, y) \leq \frac{G_+}{1\vee |x-y|^{d-2}}$ 
for every $x, y \in \Lambda_N$. 
We take $\lambda_N$ as 
$\lambda_N= (1-2\vare) b_N^{\alpha-1} 
c_\alpha (\frac{L^{d-2}}{G_+})^\alpha$. 
Then, since $G_N(x, y) \leq \frac{G_+}{L^{d-2}}$ for 
every $y\in \Lambda_L(x)^c$, 
it is easy to see that 
\begin{equation*}
{\lambda_N t} {-(1-\vare)c_\alpha 
(\frac{t}{G_N(x, y)})^{\alpha}}
\leq -\vare c_\alpha (\frac{t}{G_N(x, y)})^{\alpha}, 
\end{equation*} 
for every $t \in [\frac{1}{\lambda_N}, b_N]$. 
This yields that 
\begin{align*}
J_1^2 (y) & \leq 
\lambda_N 
\int_{\frac{1}{\lambda_N}}^{b_N} 
e^{-\vare c_\alpha 
(\frac{t}{G_N(x, y)})^{\alpha}}dt 
+ e 
e^{-(1-\vare )c_\alpha 
(\frac{1}{\lambda_N G_N(x, y)})^{\alpha}} \\
& \leq 
\frac{\lambda_N G_N(x, y)}{\alpha} 
\int_{({\lambda_N G_N(x, y)})^{-\alpha}}^{\infty} 
e^{-\vare c_\alpha s}s^{\frac{1}{\alpha}-1}ds 
+ e 
e^{-(1-\vare )c_\alpha 
(\frac{1}{\lambda_N G_N(x, y)})^{\alpha}} \\
& \leq 
e^{-\frac{1}{2}\vare c_\alpha 
({\lambda_N G_N(x, y)})^{-\alpha}}, 
\end{align*}
for every $N$ large enough. 
In particular, $J_1^2(y)$ is negligible compared to 
$J_1^1(y)$. As a result, 
\begin{align*}
\prod_{y \in \Lambda_N\cap (\Lambda_L(x))^c}
\bigl\{J_1^1(y)+J_1^2(y)\bigr\} 
& \leq 
\prod_{y \in \Lambda_N\cap (\Lambda_L(x))^c}
\bigl\{
e^{\lambda_N^2G_N(x, y)^2\sigma^2}+ 
e^{-\frac{1}{2}\vare c_\alpha 
({\lambda_N G_N(x, y)})^{-\alpha}}
\bigr\} \\
& \leq 
\exp\Bigl\{(1+\vare) \lambda_N^2 \sigma^2 
\sum\limits_{y \in \Lambda_N\cap (\Lambda_L(x))^c} 
G_N(x, y)^2 \Bigr\} \\
& \leq 2, 
\end{align*}
where the last inequality follows from 
the fact that $\lambda_N \to 0$ as $N\to \infty$ and 
\begin{equation*}\label{G2}
\sum\limits_{y\in \Lambda_N\cap (\Lambda_L(x))^c} G_N(x, y)^2 
\leq C \sum\limits_{r\geq 1}r^{d-1}
(\frac{1}{r^{d-2}})^2 
\leq C\sum\limits_{r\geq 1}r^{-d+3}
<\infty, 
\end{equation*}
if $d\geq 5$. 
Hence, we obtain
\begin{align*}
J_1(\Lambda_N\cap (\Lambda_L(x))^c) 
& \leq 2e^{-\lambda_N b_N} 
\leq N^{-CL^{(d-2)\alpha}b}, 
\end{align*}
for every $N$ large enough 
where $C>0$ is a constant independent of $N$ and $L$. 

In the case $A=\Lambda_L(x)$, 
we take $\lambda_N$ as 
$\lambda_N= (1-2\vare) b_N^{\alpha-1} 
\frac{c_\alpha}{(G^*)^\alpha}$. 
By using $\max\limits_{y \in \Lambda_L(x)}
G_N(x, y) \leq {G^*}$, we can proceed the same argument 
as above and obtain 
\begin{align*}
J_1(\Lambda_L(x)) 
& \leq 2e^{-\lambda_N b_N} 
\leq N^{-(1-3\vare)\frac{c_\alpha}{(G^*)^\alpha} b}, 
\end{align*}
for every $N$ large enough. \\

For the estimate on $J_2(A)$, we use 
the estimates 
$\max\limits_{y \in \Lambda_N\cap (\Lambda_L(x))^c}
G_N(x, y) \leq \frac{G_+}{L^{d-2}}$ and 
$\max\limits_{y \in \Lambda_L(x)}
G_N(x, y) \leq {G^*}$ again. 
The union bound yields that 
\begin{align*}
J_2(\Lambda_N\cap (\Lambda_L(x))^c) 
& \leq 
\mathbb{P}\bigl(\max\limits_{y \in \Lambda_N\cap (\Lambda_L(x))^c}
\eta (y) \geq \frac{L^{d-2}}{G_+}b_N \bigr) \\
& \leq 
| \Lambda_N\cap (\Lambda_L(x))^c |\, 
\mathbb{P}\bigl(\eta (0) \geq \frac{L^{d-2}}{G_+}b_N \bigr) 
\leq 
N^{d-C'L^{(d-2)\alpha}b}, 
\end{align*}
and
\begin{align*}
J_2(\Lambda_L(x)) 
& \leq 
\mathbb{P}\bigl(\max\limits_{y \in \Lambda_L(x)}
\eta (y) \geq \frac{1}{G^*}b_N \bigr) \\
& \leq 
| \Lambda_L(x) |\, 
\mathbb{P}\bigl(\eta (0) \geq \frac{1}{G^*}b_N \bigr) 
\leq 
N^{-(1-\vare)\frac{c_\alpha}{(G^*)^{\alpha}}b}, 
\end{align*}
for every $N$ large enough. 

By combining these estimates, 
we obtain the upper bound of 
$
\mathbb{P}\bigl( 
S_{\Lambda_{L}(x)}(x) 
\geq (b \log N)^{\frac{1}{\alpha}}\bigr)$. 
Also, 
since the law of $S_A(x)$ is symmetric by the assumption on $\eta$, 
for every $\delta>0$ we can take $L_0=L_0(\delta)\in \mathbb{N}$ 
large enough so that
\begin{align*}
\mathbb{P}\bigl( 
|S_{\Lambda_N\cap (\Lambda_{L_0}(x))^c}(x)| 
\geq (\delta \log N)^{\frac{1}{\alpha}} 
\text{ for some } x \in \Lambda_{N}\bigr) 
& \leq 
2|\Lambda_{N}| \max_{x\in \Lambda_{N}} 
\mathbb{P}\bigl( 
S_{\Lambda_N\cap (\Lambda_{L_0}(x))^c}(x) \geq 
(\delta \log N)^{\frac{1}{\alpha}} \bigr) \\
& \leq 
C''N^d\bigl\{
N^{-CL^{(d-2)\alpha} \delta }+
N^{d-C'L^{(d-2)\alpha} \delta }\bigr\}\\
& 
\leq 
N^{-\gamma},
\end{align*}
for some $\gamma>1$ and every $N$ large enough. 
Therefore, we obtain (i) by 
Borel-Cantelli's lemma. 

For the lower bound of the tail estimate, 
we have 
\begin{align*}
\mathbb{P}(S_{\Lambda_L(x)}(x) \geq b_N) 
& \geq 
\mathbb{P}(G_N(x, x)\eta(x) \geq (1+\vare)b_N) 
\mathbb{P}(S_{\Lambda_L(x)\setminus \{x\}}(x) \geq 
-\vare b_N) \\
& = 
\mathbb{P}(G_N(x, x)\eta(x) \geq (1+\vare)b_N) 
\Bigl\{ 1- 
\mathbb{P}(S_{\Lambda_L(x)\setminus \{x\}}(x) < 
-\vare b_N) \Bigr\} \\
& \geq 
\mathbb{P}(G_N(x, x)\eta(x) \geq (1+\vare)b_N) 
\Bigl\{ 1- \frac{1}{\vare^2 b_N^2}
\mathbb{E}\bigl[ (S_{\Lambda_L(x)\setminus \{x\}}(x))^2\bigr] \Bigr\}. 
\end{align*}
By the assumption on $\eta$ and the fact that 
$G_N(x, x) \to G^*$ as $N\to \infty$, 
\begin{align*}
\mathbb{P}(G_N(x, x)\eta(x) \geq (1+\vare)b_N) 
& \geq 
\exp\Bigl\{
-(1+2\vare) c_\alpha \frac{b\log N}{(G^*-\vare)^\alpha} 
\Bigr\}, 
\end{align*}
for every $N$ large enough. We also have 
$$
\mathbb{E}\bigl[ (S_{\Lambda_L(x)\setminus \{x\}}(x))^2\bigr] 
 = 
\sigma^2 \sum\limits_{y\in \Lambda_L(x)\setminus \{x\}}
G_N(x, y)^2  \leq C < \infty.$$ 
As a result, we obtain the lower bound 
\begin{align*}
\mathbb{P}(S_{\Lambda_L(x)}(x) \geq (b\log N)^{\frac{1}{\alpha}}) 
\geq 
N^{
-(1+3\vare) \frac{c_\alpha}{(G^*)^\alpha}b}, 
\end{align*}
for every $N$ large enough. 


\end{proof}

\smallskip

\begin{proof}[Proof of Lemma \ref{a1} for the case 
$(A_\alpha)$\ $\alpha =1$]
By the assumption on $\eta$, if $|t|< c_\alpha$ then 
$\mathbb{E}[|\eta(0)|^k e^{t \eta(0)}]<\infty$ for 
every $k\in \mathbb{Z}_+$. 
Let $A\subset \Lambda_N$, $x\in \Lambda_{N}$ and $b>0$. 
Assume that $\lambda>0$ satisfies 
$\lambda \{\max\limits_{y\in A}G_N(x, y)\} < c_\alpha$. 
Then, we have 
\begin{align*}
\mathbb{P}(S_A(x) \geq b_N) 
& \leq 
e^{-\lambda b_N}
\mathbb{E}[e^{\lambda S_A(x)}] 
= 
e^{-\lambda b \log N}\prod_{y\in A} 
\mathbb{E}[e^{\lambda G_N(x, y)\eta(y)}]. 
\end{align*}
By Taylor's theorem, 
for every $u\in \mathbb{R}$ there exists 
$\rho(u) \in (0, 1)$ such that 
$e^u=1+u+ \frac{1}{2}u^2e^{\rho u}$. 
Therefore, 
\begin{align*}
\mathbb{E}[e^{\lambda G_N(x, y)\eta(y)}] 
& = 
\mathbb{E}\bigl[1+ 
\lambda G_N(x, y)\eta(y)+ 
\frac{1}{2}\lambda^2 G_N(x, y)^2 \eta(y)^2 
e^{\lambda G_N(x, y) \rho \eta(y)}\bigr] \\
& \leq 
1+ \frac{1}{2}\lambda^2 G_N(x, y)^2 
\mathbb{E}[\eta(y)^2 
e^{\lambda G_N(x, y) \rho \eta(y)}] \\
& \leq 
e^{C\lambda^2 G_N(x, y)^2}, 
\end{align*}
for some $C>0$. By these estimates, we obtain 
\begin{align*}
\mathbb{P}(S_A(x) \geq b_N) 
& \leq 
\exp \bigl\{ -\lambda b \log N + 
C\lambda^2 \sum_{y\in A} G_N(x, y)^2 \bigr\}. 
\end{align*}

Now we consider the case 
$A=\Lambda_N\cap (\Lambda_L(x))^c$. 
In this case, we have 
\begin{align*}
\sum\limits_{y\in \Lambda_N\cap (\Lambda_L(x))^c} G_N(x, y)^2 
\leq C \sum\limits_{r\geq L}r^{d-1}
(\frac{1}{r^{d-2}})^2 
\leq {C}{L^{-d+4}}. 
\end{align*}
By taking $\lambda$ as 
$\lambda=\frac{c_\alpha}{G_+}L^{\frac{d}{2}-2}$, we have 
$\lambda \{\max\limits_{y\in \Lambda_N \cap (\Lambda_L(x))^c}
G_N(x, y)\} < c_\alpha$ 
and 
$\lambda^2 
\sum\limits_{y\in \Lambda_N\cap (\Lambda_L(x))^c} G_N(x, y)^2 
\leq C<\infty$. 
Therefore, 
\begin{align*}
\mathbb{P}(S_{\Lambda_N\cap (\Lambda_L(x))^c(x)} \geq b \log N) 
& \leq 
C'' \exp \bigl\{ -C' L^{\frac{d}{2}-2} b \log N \bigr\}. 
\end{align*}
By combining this estimate with the union bound and Borel-Cantelli's 
lemma as before, we obtain (i). 

For the upper bound of 
$\mathbb{P}(S_{\Lambda_L(x)}(x) \geq b \log N)$, 
we take 
$\lambda$ as $\lambda=\frac{c_\alpha}{G^*}-\vare$. 
Then, 
$\max\limits_{y\in \Lambda_L(x)} \bigl\{
\lambda G_N(x, y) \bigr\} <c_{\alpha}$ and 
$
\lambda^2 \sum\limits_{y\in \Lambda_L(x)} 
G_N(x, y)^2 \leq C<\infty$. 
As a result, we obtain 
\begin{align*}
\mathbb{P}( 
S_{\Lambda_L(x)}(x) 
\geq b \log N ) 
& \leq 
C\exp\{- \lambda b \log N\} \leq 
N^{-(\frac{c_\alpha}{G^*} - 2\vare)b}. 
\end{align*}
The lower bound of the tail estimate follows from the same argument 
to the case $0<\alpha <1$. 
\end{proof}

\smallskip

\begin{proof}[Proof of Lemma \ref{a1} for the case 
$(A_\alpha)$\ $1<\alpha \leq 2$]
First of all, by the tail assumption on $\eta$ and 
Kasahara's Tauberian theorem (cf. \cite{K78}, see also 
\cite{bib5} p.253), 
$\log \mathbb{E}[e^{\lambda \eta(0)}] \sim {C}_{0} 
\lambda^{\overline{\alpha}}$ as $\lambda \to\infty$ 
where 
$\overline{\alpha}=\frac{\alpha}{\alpha-1}$ and 
$C_0= \bigl(\overline{\alpha}
(\alpha c_\alpha)^{\overline{\alpha}-1}\bigr)^{-1}$. 
In particular, for every $\vare >0$ 
there exists $T=T(\vare)>0$ such 
that if $\lambda \geq T$ then 
$\mathbb{E}[e^{\lambda \eta(0)}] \leq \exp\bigl\{ C_0(1+\vare) 
\lambda^{\overline{\alpha}}\bigr\}$. 
Without loss of generality we may assume that $T \geq 1$. 
Next, let $|\lambda| \leq T$. By Taylor's theorem 
and symmetry of $\eta$, 
\begin{align*}
\mathbb{E}[e^{\lambda \eta(0)}] 
& = 
1+ \lambda^2 \mathbb{E}\Bigl[
\sum\limits_{n\geq 2} \frac{\lambda^{n-2}}{n!}\eta(0)^n\Bigr] 
\leq 
1+ \lambda^2 \mathbb{E}\Bigl[
\sum\limits_{n\geq 0} \frac{T^{n}}{n!}\eta(0)^n\Bigr] 
= 1+\lambda^2 \mathbb{E}[e^{T\eta(0)}]. 
\end{align*}
Therefore, if 
$0\leq \lambda \leq T$ then 
$\mathbb{E}[e^{\lambda \eta(0)}] \leq 
e^{ {C_1} \lambda^{2}}$ 
where $C_1= \mathbb{E}[e^{T \eta(0)}]<\infty$. 
For $A\subset \Lambda_N$, $x\in \Lambda_{N}$ and $b>0$, 
we have 
\begin{align*}
\mathbb{P} ( S_{A}(x) \geq b_N ) 
& \leq e^{-\lambda_N b_N}\mathbb{E}
\bigl[e^{\lambda_N S_{A}(x)} \bigr] 
= e^{-\lambda_N b_N}
\prod_{y\in A}
\mathbb{E}\bigl[ e^{\lambda_N G_N(x, y) \eta(y)} \bigr], 
\end{align*}
where $\lambda_N>0$ is to be specified later on. 
We decompose the product into the following two parts: 
\noindent
\begin{align*}
A_1 & = \{y \in A ; \lambda_N G_N(x, y)\geq T \}, \quad 
A_2 = \{y \in A ; \lambda_N G_N(x, y)< T \}. 
\end{align*}
\\
For $A_1$, 
\begin{align*}
\prod\limits_{y \in A_1}
\Eb\bigl[ e^{\lambda_N G_N(x, y) \eta(y)} \bigr] 
& \leq 
\prod\limits_{y \in A_1}
\exp\Bigl\{ (1+\vare) C_0 \bigl(
\lambda_N G_N(x, y) \bigr)^{\overline{\alpha}} \Bigr\} \\
& = 
\exp\Bigl\{ (1+\vare) C_0 \lambda_N^{\overline{\alpha}}
\sum\limits_{y\in A_1} 
G_N(x, y)^{\overline{\alpha}} \Bigr\}. 
\end{align*}
For $A_2$, 
\begin{align*}
\prod\limits_{y \in A_2}
\Eb\bigl[ e^{\lambda_N G_N(x, y) \eta(y)} \bigr] 
& \leq 
\prod\limits_{y \in A_2}
\exp\Bigl\{  C_1 \bigl(
\lambda_N G_N(x, y) \bigr)^{2} \Bigr\} \\
& = 
\exp\Bigl\{ C_1 \lambda_N^{2}
\sum\limits_{y\in A_2} 
G_N(x, y)^{2} \Bigr\}. 
\end{align*}

In the case 
$A= \Lambda_{N} \cap (\Lambda_L(x))^c $, we have 
\begin{align}\label{720}
\sum\limits_{y\in A_1} G_N(x, y)^{\overline{\alpha}} 
& \leq 
\sum\limits_{y\in \Lambda_L(x)^c} 
\bigl(\frac{G_+}{|x-y|^{d-2}}\bigr)^{\overline{\alpha}} 
\leq 
C \sum\limits_{r\geq L}r^{d-1} 
\bigl(\frac{1}{r^{d-2}}\bigr)^{\overline{\alpha}} 
\leq 
CL^{-\frac{d-2\alpha}{\alpha-1}}, 
\end{align}
and
\begin{align*}
\sum\limits_{y\in A_2} G_N(x, y)^{2} 
& \leq 
\sum\limits_{y\in \Lambda_L(x)^c} 
\bigl(\frac{G_+}{|x-y|^{d-2}}\bigr)^{2} 
\leq 
C \sum\limits_{r\geq L}r^{d-1} 
\bigl(\frac{1}{r^{d-2}}\bigr)^{2} 
\leq 
CL^{-{d+4}}, 
\end{align*}
for some constant $C>0$. 
We remark that the last inequality in (\ref{720}) 
holds under the condition $d>2\alpha$. 
Therefore, 
\begin{align*}
\mathbb{P} \bigl( S_{\Lambda_N\cap (\Lambda_{L}(x))^c}(x)
\geq b_N \bigr) 
& \leq \exp\bigl\{-\lambda_N b_N 
+ C \lambda_N^{\overline{\alpha}} L^{-\frac{d-2\alpha}{\alpha-1}}+
C \lambda_N^{2} L^{-d+4}\bigr\}. 
\end{align*}
Since $\overline{\alpha}\geq 2$ in this case, 
by taking $\lambda_N$ so that $\lambda_N \to \infty$ as $N\to \infty$ 
we have 
\begin{align*}
\mathbb{P} \bigl( S_{\Lambda_N\cap (\Lambda_{L}(x))^c}(x) 
\geq b_N \bigr) 
\leq 
\exp\Bigl\{-\lambda_N b_N
+ C' \lambda_N^{\overline{\alpha}} L^{-\frac{d-2\alpha}{\alpha-1}}\Bigr\}, 
\end{align*}
for every $N$ large enough. 
Then, 
$\lambda_N = \bigl\{\frac{b_N}{C'\overline{\alpha}}
L^{\frac{d-2\alpha}{\alpha-1}}\bigr\}^{\alpha-1}$ 
optimizes the right hand side and we obtain 
\begin{align*}\label{ub}
\mathbb{P} \bigl( S_{\Lambda_N\cap (\Lambda_{L}(x))^c}(x) 
\geq b_N \bigr) 
\leq 
\exp\bigl\{-C''L^{d-2\alpha} (b_N)^{\alpha}\bigr\} 
= N^{- C''  L^{d-2\alpha} b}, 
\end{align*}
for every $x\in \Lambda_{N}$ and $N$ large enough. 
(i) follows from this estimate as before. 

Next, let $L\in \mathbb{N}$ be fixed and 
and consider the tail estimate 
$\mathbb{P} ( S_{\Lambda_{L}(x)}(x) \geq  b_N )$. 
By taking $\lambda_N$ so that $\lambda_N\to \infty$ as 
$N\to \infty$, 
we may assume that $\lambda_N G_N(x, y) \geq T$ 
for every $y\in \Lambda_L(x)$. 
Therefore,  
\begin{align*}
\mathbb{P} \bigl( S_{\Lambda_{L}(x)}(x) 
\geq b_N \bigr) 
\leq 
\exp\Bigl\{-\lambda_N b_N+ (1+\vare) C_0 
\lambda_N^{\overline{\alpha}} 
\sum\limits_{y\in \Lambda_L(x)} 
G_N(x, y)^{\overline{\alpha}}\Bigr\}. 
\end{align*}
Since $G_N(x, y) \to G(x, y)$ 
as $N \to \infty$ for every $x, y\in \mathbb{Z}^d$, 
by optimizing the right hand side in $\lambda_N$ 
we obtain the following for every $N$ large enough: 
\begin{align*}
\mathbb{P} \bigl( S_{\Lambda_{L}(x)}(x) 
\geq b_N \bigr) 
\leq 
\exp\Bigl\{- (1-\vare) \frac{c_\alpha}
{(G_{L, (\alpha)}^*)^{\alpha-1}} b_N^\alpha \Bigr\}
= 
N^{-(1-\vare)\frac{c_\alpha}{(G_{L, (\alpha)}^*)^{\alpha-1}}b}, 
\end{align*}
where 
$G_{L, (\alpha)}^*:=
\sum\limits_{y \in \Lambda_L}G(0, y)^{\overline{\alpha}}$. 

Finally, for the lower bound of the tail estimate we have 
\begin{align*}
\mathbb{P} \bigl( S_{\Lambda_{L}(x)}(x) 
\geq b_N \bigr) 
& = 
\mathbb{P} \Bigl( \sum\limits_{y\in \Lambda_{L}(x)}G_N(x, y) \eta(y) 
\geq \sum\limits_{y\in \Lambda_{L}(x)}
\bigl\{
\sum\limits_{y\in \Lambda_{L}(x)}G_N(x, y)^{\overline{\alpha}}
\bigr\}^{-1}
G_N(x, y)^{\overline{\alpha}}b_N \Bigr) \\
& \geq 
\mathbb{P} \Bigl( \eta(y) \geq 
\bigl\{
\sum\limits_{y\in \Lambda_{L}(x)}G_N(x, y)^{\overline{\alpha}}
\bigr\}^{-1}
G_N(x, y)^{\overline{\alpha}-1}b_N 
\text{ for every } y\in \Lambda_L(x)\Bigr) \\
& = 
\prod_{y\in \Lambda_L(x)} 
\mathbb{P} \Bigl( \eta(y) \geq 
\bigl\{
\sum\limits_{y\in \Lambda_{L}(x)}G_N(x, y)^{\overline{\alpha}}
\bigr\}^{-1}
G_N(x, y)^{\overline{\alpha}-1}b_N 
\Bigr) \\
& \geq 
\prod_{y\in \Lambda_L(x)} 
\exp\Bigl\{ -(1+\vare)c_\alpha 
\bigl\{\frac{G_N(x, y)^{\overline{\alpha}-1}}
{\sum\limits_{y\in \Lambda_{L}(x)}G_N(x, y)^{\overline{\alpha}}}
\bigr\}^{\alpha} (b_N)^\alpha\Bigr\}\\
& \geq 
\exp\Bigl\{-(1+2\vare)c_\alpha \frac{1}
{(G_{L, (\alpha)}^*)^{{\alpha}-1}}b \log N
\Bigr\}, 
\end{align*}
for every $N$ large enough 
where the second last inequality follows from the tail assumption on $\eta$ 
and we used the fact that 
$G_N(x, y) \to G(x, y)$ 
as $N \to \infty$ for every $x, y\in \mathbb{Z}^d$
for the last inequality. 
\end{proof}

\smallskip

\begin{proof}[Proof of Lemma \ref{a1} for the case 
$(\widetilde{A})$]
By the similar argument to the proof for the case 
$(A_\alpha)$\ $1 < \alpha \leq 2$, 
we can prove that there exists 
$L_0=L(\delta)\in \mathbb{N}$ such that 
\begin{equation*}\label{426}
\mathbb{P}\bigl( 
|S_{\Lambda_N\cap (\Lambda_{L_0}(x))^c}(x)| \leq 
\frac{1}{2}(\delta \log N)^{\frac{1}{2}} \text{ for every } 
x \in \Lambda_{N}, \text{ for all large } N 
\bigr)=1.
\end{equation*}
Hence, it is sufficient to prove 
\begin{equation}\label{503}
\mathbb{P}\bigl( 
|S_{\Lambda_N \cap \Lambda_{L_0}(x)}(x)| \leq 
\frac{1}{2}(\delta \log N)^{\frac{1}{2}} \text{ for every } 
x \in \Lambda_{N}, \text{ for all large } N 
\bigr)=1.
\end{equation}
If $|\eta(y)| <\frac{(\delta \log N)^{\frac{1}{2}}}
{2|\Lambda_{L_0}|G^*}$ for every $y\in \Lambda_N$, then 
$|\sum\limits_{y\in \Lambda_N \cap \Lambda_{L_0}(x)} 
G_N(x, y) \eta(y)| <
\frac{1}{2}{(\delta \log N)^{\frac{1}{2}}}$ 
for every $x\in \Lambda_{N}$. Therefore, by the tail assumption on 
$\eta$ we have 
\begin{align*}
\mathbb{P}\bigl( 
|S_{\Lambda_N \cap \Lambda_{L_0}}(x)| \geq \frac{1}{2} 
(\delta {\log N})^{\frac{1}{2}} \text{ for some } 
x \in \Lambda_{N}\bigr) 
& \leq 
\mathbb{P}\bigl( 
|\eta(y)| \geq 
\frac{(\delta \log N)^{\frac{1}{2}}}{2|\Lambda_{L_0}|G^*} 
\text{ for some } y\in \Lambda_N \bigr) \\
& \leq 
CN^d \exp\Bigl\{ -C f(N)(\log N)\Bigr\}, 
\end{align*}
for every $N$ large enough where $f(N)$ satisfies 
$f(N)\to \infty$ as $N\to \infty$. 
The right hand side is summable in $N\geq 1$ 
and (\ref{503}) follows from Borel-Cantelli's lemma. 
\end{proof}

\section{Proof of Theorem \ref{thm1}}
In this section we give the proof of Theorem \ref{thm1}. 
By the observation given in subsection 1.2, the law of 
$\{\phi(x)\}_{x\in \Lambda_N}$ under $\mu_N^{\eta}$ 
for fixed $\eta=\{\eta(x)\}_{x\in \mathbb{Z}^d}$ 
is the same as the law of 
$\{{\phi}(x)+ m_N^{\eta}(x) \}_{x\in \Lambda_N}$ where 
$\{{\phi(x)}\}_{x\in \Lambda_N}$ is distributed by $\mu_N$ and 
is independent of $\eta$. 
$m_N^\eta$ is given by 
$m_N^\eta(x)= E^{\mu_N^{\eta}}[\phi(x)] 
= \sum\limits_{y\in \Lambda_N}G_N(x, y)\eta(y)$, $x\in \Lambda_N$.


\subsection{Lower bound of the maximum }
For the proof of (\ref{maxglb}), we have 
\begin{equation*}
\mu_N^{\eta}\bigl( 
\max\limits_{x\in \Lambda_N} \phi(x) \leq 
b(\log N)^{\frac{1}{\alpha \wedge 2}} \bigr) 
=
\mu_N\bigl( \max\limits_{x\in \Lambda_N} 
\{\phi(x) + m_N^\eta(x)\} \leq 
b(\log N)^{\frac{1}{\alpha \wedge 2}}\bigr), 
\end{equation*}
for every $b\in \mathbb{R}$. 
By Lemma \ref{a1} we may replace 
$m_N^\eta(x)= \sum\limits_{y\in \Lambda_N}G_N(x, y)\eta(y)$ 
by $\widetilde{m}_{N, L}^\eta(x) := 
\sum\limits_{y\in \Lambda_L(x)}G_N(x, y)\eta(y)$ 
for $L$ large enough. 
We apply the quenched-annealed comparison argument used in 
\cite{bib3} and \cite{bib4}. 
For this purpose we prepare several notations. 
$\mathbb{P} \otimes \mu_N$ denotes the product measure 
on the configuration space 
$\mathbb{R}^{\mathbb{Z}^d}\times \mathbb{R}^{\Lambda_N}$ 
equipped with product topology and Borel $\sigma$-algebra. 
We may identify the configuration 
$\eta=\{\eta(x)\}_{x\in \mathbb{Z}^d}$ as a family of 
i.i.d. random variables under $\mathbb{P}$ 
by considering the coordinate map.  
$\mathcal{T}^{\eta}$ denotes 
trivial $\sigma$-algebra for $\eta$-field. 
We also define $\sigma$-algebras for $\eta$-field 
and $\phi$-field 
on $A\subset \mathbb{Z}^d$ 
by 
$\mathcal{F}^\eta_A = \sigma( \eta(x); x \in A)$ 
and 
$\mathcal{F}^\phi_A = \sigma( \phi(x); x \in A)$, 
respectively. 
For an $\mathcal{F}^\phi_{\Lambda_N}$-measurable event $\mathcal{A}$, 
we sometimes interpret it as 
$\mathbb{R}^{\mathbb{Z}^d} \times \mathcal{A}$ 
for notational simplicity. 


Now, we consider the $\mathcal{F}_{\mathbb{Z}^d}^\eta$-measurable 
random variable: 
\begin{equation*}
X_N:= \mu_N\bigl( \max\limits_{x\in \Lambda_N} 
\{\phi(x) + \widetilde{m}_{N, L}^\eta(x)\} \leq 
b(\log N)^{\frac{1}{\alpha \wedge 2}}\bigr),
\end{equation*}
and assume that we have the estimate 
${\mathbb{E}}[X_N] \leq e^{-N^\lambda}$ 
for some $\lambda >0$ and every $N$ large enough. 
Then, by Markov's inequality we have 
$
{\mathbb{P}} ( X_N \geq e^{-(1-\vare)N^\lambda} ) 
\leq e^{-\vare N^\lambda}$ 
for $\vare >0 $. 
The right hand side is summable in $N \geq 1$ and 
Borel Cantelli's lemma yields 
${\mathbb{P}} (X_N \leq e^{-(1-\vare)N^\lambda}, 
\text{ for all large } N )=1$. 
Hence we obtain 
$\lim\limits_{N\to \infty}
\mu_N^{\eta}\bigl( \max\limits_{x\in \Lambda_N} \phi(x) 
\geq b(\log N)^{\frac{1}{\alpha \wedge 2}} \bigr)=1$, 
${\mathbb{P}}$-a.s..
Since we assume that the law of $\eta$ is symmetric, 
${\mathbb{E}} [X_N]= {\mathbb{P}}\otimes \mu_N
(\widetilde{\Omega}_{N}^+(-b))$ 
by Fubini's theorem where 
\begin{equation*}
\widetilde{\Omega}_{N}^+(a) := \{ (\eta, \phi); 
\phi(x) + \widetilde{m}_{N, L}^\eta(x) 
\geq a (\log N)^{\frac{1}{\alpha \wedge 2}}, 
\text{ for every }x \in \Lambda_{N} \},\ a\in \mathbb{R}. 
\end{equation*}
Therefore, for the proof of (\ref{maxglb}) 
it is sufficient to prove that 
if $b< M^*$, then 
there exists some constant $\lambda >0$ 
such that 
$
{\mathbb{P}}\otimes \mu_N
(\widetilde{\Omega}_{N}^+(-b))
\leq e^{-N^\lambda}$ 
for every $N$ large enough 
where $M^*$ is given by (\ref{MM}). 

For $1 \ll L \ll {N}$, set
$\Gamma_N =\{ x \in 4L\mathbb{Z}^d; \Lambda_L(x) \subset \Lambda_{N}\}$ 
and $D_L = \bigcup\limits_{x \in \Gamma_N} \partial^+ \Lambda_L(x)$ 
where $\Lambda_L(x) = x + \Lambda_L$ for $x \in \mathbb{Z}^d$. 
We also define 
$M_L^\phi(x) = E^{\mu_N}\bigl[\phi(x) | \mathcal{F}_{D_L}^\phi
\bigr]$ for $x \in \Gamma_N$. 
Note that $M_L^\phi(x)$ is 
$\sigma(\phi(z); z\in \partial^+ \Lambda_L(x))$-measurable for 
each $x \in \Gamma_N$ by spatial Markov property of the field. 
For $\delta, \delta' \in (0, 1)$ and $\beta, \beta' \in \mathbb{R}$, 
consider the events: 
\begin{align*}
\mathcal{A}_{\delta, \beta} & = 
\bigl\{ \phi ; | \{x \in {\Gamma}_N ; 
M_L^\phi(x) \leq \beta(\log N)^{\frac{1}{\alpha \wedge 2}}\}| 
\leq \delta |{\Gamma}_N| \bigr\},
\\
\mathcal{A}'_{\delta', \beta'} & = 
\bigl\{ \phi ; | \{x \in {\Gamma}_N ; 
\phi(x) \leq \beta' (\log N)^{\frac{1}{\alpha \wedge 2}} 
\}| \leq \delta' |{\Gamma}_N| \bigr\}, 
\end{align*}
where for every $A\subset\mathbb{Z}^d$, 
$|A|$ denotes its cardinality. 
Then, we have 
\begin{equation*}
\widetilde{\Omega}_{N}^+(-b) \subset 
\bigl(\widetilde{\Omega}_{N}^+(-b) \cap (\mathcal{A}_{\delta, \beta})^c \bigr)
\cup 
\bigl(\mathcal{A}_{\delta, \beta} \cap (\mathcal{A}'_{2\delta, \beta'})^c 
\bigr) \cup 
\bigl(\mathcal{A}'_{2\delta, \beta'} \cap 
\widetilde{\Omega}_{N}^+(-b) \bigr).
\end{equation*}
We estimate the probability of each term in the right hand side. 

For the first term, 
\begin{align*}
& \mathbb{P} \otimes \mu_N\bigl( 
\widetilde{\Omega}_{N}^+(-b) \cap (\mathcal{A}_{\delta, \beta})^c \bigr) \\
& \quad \leq 
E^{\mathbb{P}\otimes \mu_N}\Bigl[
\prod_{x \in \Gamma_N} \mathbb{P}\otimes \mu_N\bigl(\phi(x)
+ \widetilde{m}_{N, L}^\eta(x) \geq 
-b (\log N)^{\frac{1}{\alpha \wedge 2}} \bigm| 
\mathcal{T}^\eta \otimes \mathcal{F}_{D_L}^\phi \bigr); 
(\mathcal{A}_{\delta, \beta})^c \Bigr]\\
& \quad = 
E^{\mathbb{P}\otimes \mu_N}\Bigl[
\prod_{x \in \Gamma_N} \Bigl\{1- 
\mathbb{P}\otimes \mu_N\bigl(\phi(x)+\widetilde{m}_{N, L}^\eta(x)
 < -b (\log N)^{\frac{1}{\alpha \wedge 2}} \bigm| 
\mathcal{T}^\eta \otimes \mathcal{F}_{D_L}^\phi 
\bigr) \Bigr\}; 
(\mathcal{A}_{\delta, \beta})^c \Bigr]\\
& \quad \leq 
E^{\mathbb{P}\otimes \mu_N}\Bigl[
\prod_{x \in \Gamma_N} \Bigl\{1- 
\mu_N\bigl(\phi(x)  \leq 
-\gamma (\log N)^{\frac{1}{\alpha \wedge 2}} 
\bigm| \mathcal{F}^\phi_{D_L} \bigr) \\
& \qquad \qquad \qquad \qquad \qquad \qquad \times 
\mathbb{P}\bigl(-\widetilde{m}_{N, L}^\eta(x) \geq 
(b-\gamma) (\log N)^{\frac{1}{\alpha \wedge 2}}\bigr) \Bigr\} ; 
(\mathcal{A}_{\delta, \beta})^c \Bigr], 
\end{align*}
for every $\gamma \in \mathbb{R}$. 
Note that $\{\widetilde{m}_{N, L}^\eta(x)\}_{x\in \Gamma_N}$ 
is a family of independent random variables. 
The estimate on the right hand side changes depending on the 
tail assumption on $\eta$. 
We remark that by Proposition \ref{gfunc} and the 
similar computation as (\ref{720}),  
$G^*_{(\alpha)}=\sum\limits_{x\in \mathbb{Z}^d} 
G(0, x)^{\overline{\alpha}}<\infty$ if $d>2\alpha$ and 
$d\geq 3$. In particular 
$G^{*}_{L, (\alpha)} \to G^*_{(\alpha)}$ as $L\to \infty$ 
in this case. 

\smallskip

\noindent
{\underline{{\em The case $(A_\alpha)$\ $\alpha =2$}}}

\noindent
Let $\vare>0$, $\beta >0$ and $0<\gamma<b$. 
For each $x \in \Gamma_N$, if $M_L^\phi(x) \leq \beta \sqrt{\log N}$ 
then we have 
\begin{align*}
\mu_N\bigl(\phi(x) \leq - \gamma \sqrt{\log N} \bigm| 
\mathcal{F}^\phi_{D_L} \bigr) 
& \geq 
\mu_N\bigl(\phi(x) -M_L^\phi(x) \leq 
-(\beta+\gamma) \sqrt{\log N}\, \bigm| 
\mathcal{F}^\phi_{D_L} \bigr) \\
& \geq 
\exp\Bigl\{-\frac{1+\vare}{2G_L^*}(\beta+\gamma)^2\log N\Bigr\} \\
& = 
N^{-\frac{1+\vare}{2G_L^*}(\beta+\gamma)^2}, 
\end{align*}
for every $N$ large enough. 
Note that the law of $\phi(x)-M_L^\phi(x)$ under 
$\mu_N(\, \cdot \, | \mathcal{F}_{D_L}^\phi)$ is centered 
Gaussian with the variance $G_L^*:=G_L(0, 0)$.  
Also, by Lemma \ref{a1}\, (ii), 
\begin{equation*}
\mathbb{P}\bigl(-\widetilde{m}_{N, L}^\eta(x) \geq 
(b-\gamma) \sqrt{\log N}\bigr) \geq N^{-(1+\vare )K_L^*(b-\gamma)^2},  
\end{equation*}
for every $x \in \Gamma_N$ 
where 
$K_L^*= \frac{c_2}{G_{L, (2)}^*}$, 
${G_{L, (2)}^*}=\sum\limits_{x\in \Lambda_L} 
G(0, x)^{2}$. 
By collecting all the estimates, we have 
\begin{align}\label{a73}
\begin{split}
\mathbb{P} \otimes \mu_N\bigl( 
\widetilde{\Omega}_{N}^+(-b) \cap (\mathcal{A}_{\delta, \beta})^c \bigr) 
& \leq 
\Bigl\{1-N^{-(1+\vare)\bigl( \frac{1}{2G_L^*}(\beta+\gamma)^2
+K_L^*(b-\gamma)^2\bigr) }\Bigr\}^{\delta |\Gamma_N|} \\
& \leq 
\exp\Bigl\{
-CN^{d-(1+\vare)\bigl(\frac{1}{2G_L^*}(\beta+\gamma)^2
+K_L^*(b-\gamma)^2\bigr)}\Bigr\}, 
\end{split}
\end{align}
for every $N$ large enough. 
Now, we take $\gamma$ as 
$\gamma= \frac{2G_L^*K_L^* b}{2G^*_L K_L^*+1}$. 
Then, 
$\frac{1}{2G_L^*}\gamma^2
+K_L^*(b-\gamma)^2 
= \frac{1}{2G_L^*+ \frac{1}{K_L^*}}b^2$. 
Since 
$G_L^* \to G^*$, 
$G_{L, (2)}^* \to G_{(2)}^*$ as $L\to \infty$ 
and we assume that 
$b<\sqrt{(2G^*+\frac{G_{(2)}^*}{c_2})d}$, 
if we take $\vare>0$ small enough 
and $L$ large enough then there exists 
$\beta=\beta_0>0$ small enough such that 
the right hand side of (\ref{a73}) 
is less than $e^{-N^\lambda}$ for 
some $\lambda>0$. 

\smallskip

\noindent
{\underline{{\em The case $(A_\alpha)$ $0< \alpha <2$}}}

\noindent
In this case we take $\gamma$ as $\gamma = -\beta$ 
in the above argument. Then, 
\begin{align*}
\mu_N\bigl(\phi(x) \leq -\gamma ({\log N})^{\frac{1}{\alpha}} 
\bigm| \mathcal{F}^\phi_{D_L}\bigr) 
\geq 
\mu_N\bigl(\phi(x) - M_L^\phi(x)\leq 0 
\bigm| \mathcal{F}^\phi_{D_L} \bigr) = \frac{1}{2}, 
\end{align*}
under the condition $M_L^\phi(x) \leq \beta 
(\log N)^{\frac{1}{\alpha}}$ and 
\begin{align*}
\mathbb{P} \bigl(-\widetilde{m}_{N, L}^\eta(x) \geq 
(b-\gamma) ({\log N})^{\frac{1}{\alpha}}\bigr) 
\geq N^{-(1+\vare)K_L^*(b+\beta)^\alpha}, 
\end{align*}
where $K_L^*$ is given by (\ref{constK}). 
Therefore, 
\begin{align*}
\mathbb{P} \otimes \mu_N\bigl( 
\widetilde{\Omega}_{N}^+(-b) \cap (\mathcal{A}_{\delta, \beta})^c \bigr) 
& \leq 
\Bigl\{1-\frac{1}{2}N^{-(1+\vare)K_L^*(b+\beta)^\alpha}
\Bigr\}^{\delta |\Gamma_N|} \\
& \leq 
\exp\Bigl\{
-CN^{d-{(1+\vare)}K_L^*(b+\beta )^\alpha}
\Bigr\}. 
\end{align*}
If $b< M^*$ 
then by taking $\vare>0$ small enough and $L$ 
large enough, there exists $\beta=\beta_0>0$ small enough such that 
the right hand side is less than $e^{-N^\lambda}$ for 
some $\lambda>0$. 

\smallskip

\noindent
{\underline{{\em The case $(\widetilde{A})$}}}

\noindent
In this case we take $\gamma$ as $\gamma =b$. 
Then, 
$\mathbb{P} \bigl(-\widetilde{m}_{N, L}^\eta(x) \geq 
(b-\gamma) \sqrt{\log N}\bigr) \geq \frac{1}{2}$ and 
we have the estimate 
\begin{align*}
\mathbb{P} \otimes \mu_N\bigl( 
\widetilde{\Omega}_{N}^+(-b) \cap (\mathcal{A}_{\delta, \beta})^c \bigr) 
& \leq 
\Bigl\{1-\frac{1}{2}N^{-\frac{(1+\vare)}{2G_L^*}(\beta + b)^2}
\Bigr\}^{\delta |\Gamma_N|} \\
& \leq 
\exp\Bigl\{
-CN^{d-\frac{(1+\vare)}{2G_L^*}(\beta + b)^2}
\Bigr\}. 
\end{align*}
If $b<\sqrt{2dG^*}$, then by taking $\vare>0$ small enough 
and $L$ large enough, 
there exists $\beta=\beta_0>0$ small enough such that 
the right hand side is less than $e^{-N^\lambda}$ for 
some $\lambda>0$. 

\smallskip

The rest of the proof works well in all cases. 
Let $\beta' \in (0, \beta_0)$ 
where $\beta_0>0$ is given by the above argument. 
On $\mathcal{A}_{\delta, \beta_0} \cap 
(\mathcal{A}'_{2\delta, \beta'})^c$ we have 
$ | \{x \in {\Gamma}_N; 
\phi(x)-M_L^\phi(x)< -({\beta_0}-{\beta'})
(\log N)^{\frac{1}{\alpha \wedge 2}} \}| 
\geq \delta |{\Gamma}_N|$. 
Since $\{\phi(x)- M_L^\phi(x)\}_{x\in \Gamma_N}$ under 
$\mu_N(\ \cdot\ | \mathcal{F}_{D_L}^\phi )$ 
is a family of i.i.d. random variables and 
$\mu_N( \phi(x)-M_L^\phi(x)< -({\beta_0}-{\beta'})
(\log N)^{\frac{1}{\alpha \wedge 2}} |\, 
\mathcal{F}_{D_L}^\phi ) \to 0$ 
as $N\to \infty$, 
by standard LDP argument we have 
$\mathbb{P}\otimes \mu_N\bigl(
\mathcal{A}_{\delta, \beta_0} \cap (\mathcal{A}'_{2\delta, \beta'})^c 
\bigr)\leq e^{-CN^d}$ 
for every $N$ large enough. 

For the estimate of 
$
\mathbb{P}\otimes \mu_N\bigl(
\mathcal{A}'_{2\delta, \beta'} \cap 
\widetilde{\Omega}_{N}^+(-b)\bigr)
= 
{\mathbb{E}}\bigl[
\mu_N\bigl( \mathcal{A}'_{2\delta, \beta'} \cap 
\widetilde{\Omega}_{N}^+(-b) \bigr) \bigr]$, 
we consider the $\mathcal{F}^\eta_{\mathbb{Z}^d}$-measurable event: 
\begin{equation*}
\mathcal{B}_{\beta'', \delta}= 
\bigl\{\eta ; | \{x \in {\Gamma}_N ; 
\widetilde{m}_{N, L}^\eta(x) \geq \beta'' 
(\log N)^{\frac{1}{\alpha \wedge 2}}\} | \geq \delta 
|{\Gamma}_N| \bigr\}, 
\end{equation*}
for $\beta'', \delta>0$. 
Since $\{\widetilde{m}_{N, L}^\eta(x)\}_{x \in \Gamma_N}$ 
are independent and 
$\max\limits_{x\in \Gamma_N} 
\mathbb{P}( \widetilde{m}_{N, L}^\eta(x) \geq \beta'' 
(\log N)^{\frac{1}{\alpha \wedge 2}}
)\to 0$ as $N\to \infty$, 
by standard  LDP estimate again we have 
$\mathbb{P} (\mathcal{B}_{\beta'', \delta}) \leq 
e^{-CN^d}$. 
Therefore, we have only to consider the expectation on the event 
$(\mathcal{B}_{\beta'', \delta})^c$. 
We fix a realization 
$\eta \in (\mathcal{B}_{\beta'', \delta})^c$ 
and set 
$\widetilde{\Gamma}_N = \{x \in \Gamma_N; 
\widetilde{m}_{N, L}^\eta(x) \leq \beta'' 
(\log N)^{\frac{1}{\alpha \wedge 2}}\}$. 
For 
$\phi \in 
\mathcal{A}'_{2\delta, \beta'} \cap 
\widetilde{\Omega}_{N}^+(-b)$ 
there are at least $(1-3\delta)|\widetilde{\Gamma}_N|$ 
sites $x \in \widetilde{\Gamma}_N$ such that 
$\phi(x) \geq \beta'({\log N})^{\frac{1}{\alpha \wedge 2}}$ 
and in the remaining 
at most $3\delta|\widetilde{\Gamma}_N|$
sites of $\widetilde{\Gamma}_N$ we have 
$\phi(x) \geq -\widetilde{m}_{N, L}^\eta(x) 
-b(\log N)^{\frac{1}{\alpha \wedge 2}} 
\geq -(\beta''+b) (\log N)^{\frac{1}{\alpha \wedge 2}}$. 
Then, 
$
\sum\limits_{x \in \widetilde{\Gamma}_N} \phi(x) 
\geq \bigl\{(1-3\delta)\beta' - 3\delta
(\beta''+b)\bigr\}|\widetilde{\Gamma}_N|
(\log N)^{\frac{1}{\alpha \wedge 2}}$ 
and we can take $\delta, \beta''>0$ small enough so that 
$\lambda_1:= 
(1-3\delta)\beta' - 3\delta (\beta''+b)>0$. 
As a result, for given 
$\eta \in (\mathcal{B}_{\beta'', \delta})^c$ it holds that 
\begin{align*}
\mu_N\bigl(
\mathcal{A}'_{2\delta, \beta'} \cap 
\widetilde{\Omega}_{N}^+(-b) \bigr) 
& \leq 
\mu_N\bigl(
\sum\limits_{x \in \widetilde{\Gamma}_N} \phi(x) 
\geq \lambda_1 |\widetilde{\Gamma}_N| 
(\log N)^{\frac{1}{\alpha \wedge 2}} \bigr) \\
& \leq 
\exp\Bigl\{
-\frac{\lambda_1^2 |\widetilde{\Gamma}_N|^2 
(\log N)^{\frac{2}{\alpha \wedge 2}}}
{2\mathrm{Var}_{\mu_N}\bigl(
\sum\limits_{x \in \widetilde{\Gamma}_N} 
\phi(x)\bigr)}\Bigr\}. 
\end{align*}
Though $\widetilde{\Gamma}_N$ is an 
$\mathcal{F}^\eta_{\mathbb{Z}^d}$-measurable random set, 
we have uniform estimates: 
\begin{align*}
\mathrm{Var}_{\mu_N}\bigl(
\sum\limits_{x \in \widetilde{\Gamma}_N} 
\phi(x)\bigr) \leq 
\mathrm{Var}_{\mu_N}\bigl(
\sum\limits_{x \in {\Gamma}_N} 
\phi(x)\bigr) \leq CN^{d+2}, 
\end{align*}
and 
$|\widetilde{\Gamma}_N|\geq (1-\delta)|{\Gamma}_N| \geq CN^d$. 
Hence, we obtain 
\begin{align*}
\mathbb{P}\otimes \mu_N\bigl(
\mathcal{A}'_{2\delta, \beta'} \cap 
\widetilde{\Omega}_{N}^+(-b)\bigr)
& \leq 
{\mathbb{E}}\bigl[
\sup\limits_{\eta \in (\mathcal{B}_{\beta'', \delta})^c}
\mu_N\bigl(
\mathcal{A}'_{2\delta, \beta'} \cap 
\widetilde{\Omega}_{N}^+(-b) \bigr)\bigr] \\
& \leq e^{-CN^{d-2}(\log N)^{\frac{2}{\alpha \wedge 2}}}, 
\end{align*} 
and this completes the proof of (\ref{maxglb}). 

\qed

\subsection{Upper bound of the maximum}
First of all, for the proof of (\ref{maxgub}) and (\ref{maxgub2}) 
we give estimates on the number of high points 
of $m_N^\eta$-field. 

\smallskip

\begin{lemma}\label{mht}
Let $d\geq 5$ and 
set 
$m_N^\eta(x)= \sum\limits_{y\in \Lambda_N}G_N(x, y) \eta(y)$, 
$x\in \Lambda_N$. 
\begin{enumerate}[$(i)$]
\item
Assume that 
$\eta=\{\eta(x)\}_{x\in \mathbb{Z}^d}$ is a family of i.i.d. 
symmetric random variables which satisfies the 
condition $(A_\alpha)$. 
Then, for every $b\in (0, \frac{d}{K}]$ and $\vare>0$ 
it holds that 
\begin{equation}\label{151}
\mathbb{P}\bigl( 
|\{x\in \Lambda_{N}; m_N^\eta(x) \geq 
({b \log N})^{\frac{1}{\alpha}}\}| 
\leq N^{d-Kb+\vare}, 
\text{ for all large } N \bigr)=1, 
\end{equation}
where $K$ is a constant given by $(\ref{constK})$. 
Also, for every $\vare>0$ we have 
\begin{equation}\label{152}
\mathbb{P}\bigl( 
\max\limits_{x\in \Lambda_N} m_N^\eta(x) \leq 
(\frac{d+1+\vare}{K} \log N)^{\frac{1}{\alpha}}, 
\text{ for all large } N \bigr)=1. 
\end{equation}
\item
Assume that 
$\eta=\{\eta(x)\}_{x\in \mathbb{Z}^d}$ is a family of i.i.d. 
symmetric random variables which satisfies the 
condition $(\widetilde{A})$. 
Then, for every $\vare >0$ we have 
\begin{equation*}
\mathbb{P}\bigl( 
\max_{x\in \Lambda_N} m_N^\eta(x) \leq 
\vare \sqrt{ \log N}, \text{ for all large } N 
\bigr)=1.
\end{equation*}
\end{enumerate}
\end{lemma}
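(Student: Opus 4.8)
The plan is to reduce everything to Lemma \ref{a1} through the decomposition $m_N^\eta(x) = \widetilde{m}_{N,L}^\eta(x) + S_{\Lambda_N \cap (\Lambda_L(x))^c}(x)$, where $\widetilde{m}_{N,L}^\eta(x) = S_{\Lambda_L(x)}(x) = \sum_{y\in\Lambda_L(x)}G_N(x,y)\eta(y)$. Part $(ii)$ is immediate: since $m_N^\eta(x) = S_{\Lambda_N}(x)$, Lemma \ref{a1}$(iii)$ applied with $\delta = \vare^2$ already gives $|m_N^\eta(x)| \le \vare\sqrt{\log N}$ for every $x\in\Lambda_N$ and all large $N$, $\mathbb{P}$-a.s., which is stronger than the claim. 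So the work is all in part $(i)$; throughout, $K$ will denote the $L\to\infty$ limit of the constant in (\ref{constK}), i.e. $c_\alpha/(G^*)^\alpha$ for $\alpha\le 1$ and $c_\alpha/(G^*_{(\alpha)})^{\alpha-1}$ for $\alpha\in(1,2]$, and $K_L$ the $L$-truncated constant of (\ref{constK}), so that $K_L\to K$ by the convergence $G^*_{L,(\alpha)}\to G^*_{(\alpha)}$.

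For (\ref{151}), fix $b\in(0,d/K]$ and $\vare>0$. Choose $\delta>0$ small, then $L$ large (at least the $L_0(\delta)$ of Lemma \ref{a1}$(i)$, and large enough that $K_L$ is close to $K$), then $\vare'>0$ small, so that, writing $b':=(b^{1/\alpha}-\delta^{1/\alpha})^\alpha$, we have $(K_L-\vare')b' > Kb-\vare$. On the $\mathbb{P}$-a.s. event of Lemma \ref{a1}$(i)$ one has $|S_{\Lambda_N\cap(\Lambda_L(x))^c}(x)| \le (\delta\log N)^{1/\alpha}$ for all $x\in\Lambda_N$ and all large $N$, hence $\{x\in\Lambda_N : m_N^\eta(x)\ge(b\log N)^{1/\alpha}\}\subseteq\{x\in\Lambda_N : \widetilde{m}_{N,L}^\eta(x)\ge(b'\log N)^{1/\alpha}\}$ for all large $N$, and it suffices to bound the cardinality of the right-hand set. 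Partition $\Lambda_N$ into the $(4L)^d$ cosets of $4L\mathbb{Z}^d$: along each coset the boxes $\Lambda_L(x)$ are pairwise disjoint, so $\{\widetilde{m}_{N,L}^\eta(x)\}_x$ is a family of i.i.d. random variables, and by Lemma \ref{a1}$(ii)$ each obeys $\mathbb{P}(\widetilde{m}_{N,L}^\eta(x)\ge(b'\log N)^{1/\alpha}) \le N^{-(K_L-\vare')b'} =: p_N$ for all large $N$. The number of high points along one coset is therefore stochastically dominated by $\mathrm{Bin}(|\Lambda_N|,p_N)$, and the elementary bound $\mathbb{P}(\mathrm{Bin}(n,p)\ge t)\le(enp/t)^t$ with $t=N^{d-Kb+\vare}$ (note $t\to\infty$ because $b\le d/K$) gives a probability at most $(eCN^{\,Kb-(K_L-\vare')b'-\vare})^{N^{d-Kb+\vare}}$; since the exponent inside is negative and $t\to\infty$, this decays faster than any power of $N$. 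Summing over $N$ and over the $(4L)^d$ cosets and invoking Borel--Cantelli then yields (\ref{151}) (running the argument with $\vare/2$ in place of $\vare$ absorbs the factor $(4L)^d$).

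For (\ref{152}), run the same decomposition but take $b=(d+1+\vare)/K$; now one only needs that the event $\{\max_{x\in\Lambda_N}\widetilde{m}_{N,L}^\eta(x)\ge(b'\log N)^{1/\alpha}\}$ occurs for finitely many $N$. A union bound over $x\in\Lambda_N$ together with Lemma \ref{a1}$(ii)$ bounds its probability by $CN^{\,d-(K_L-\vare')b'}$, and with this choice of $b$ one has $(K_L-\vare')b'\to d+1+\vare>d+1$ as the parameters are tuned, so this is summable in $N$; Borel--Cantelli together with the a.s. event of Lemma \ref{a1}$(i)$ gives (\ref{152}).

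I expect the only genuinely delicate step to be the binomial tail estimate in the proof of (\ref{151}): the naive first-moment (Markov) bound on the number of high points decays only like a fixed power of $N$ and is not summable, so it is essential to exploit the independence of $\{\widetilde{m}_{N,L}^\eta(x)\}$ over the sublattice $4L\mathbb{Z}^d$ and the concentration of binomial sums to upgrade to super-polynomial decay. The accompanying bookkeeping — arranging $\delta$, $1/L$ and $\vare'$ so that $(K_L-\vare')b'>Kb-\vare$ (respectively $>d+1$), in that order — is routine, and the argument uses crucially that $d\ge 5$, which is precisely what makes the far contribution $S_{\Lambda_N\cap(\Lambda_L(x))^c}(x)$ negligible via Lemma \ref{a1}$(i)$.
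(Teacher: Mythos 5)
Your proof is correct and follows essentially the same strategy as the paper. Both arguments reduce $m_N^\eta$ to the truncated mean $\widetilde{m}_{N,L}^\eta$ via Lemma~\ref{a1}\,(i), partition $\Lambda_N$ into the $(4L)^d$ cosets of $4L\mathbb{Z}^d$ to exploit independence of $\{\widetilde{m}_{N,L}^\eta(x)\}$ within each coset, apply a concentration inequality for sums of independent indicators to get super-polynomial decay, and finish with Borel--Cantelli; (\ref{152}) and part~(ii) are handled by a union bound and by Lemma~\ref{a1}\,(iii) respectively, exactly as in the paper. The only cosmetic difference is that the paper invokes Bernstein's inequality where you use the elementary binomial tail estimate $\mathbb{P}(\mathrm{Bin}(n,p)\ge t)\le (enp/t)^t$; both give the needed bound $\exp\{-cN^{d-Kb+\vare/2}\}$, so this is a matter of taste, not of substance.
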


\begin{proof}
By Lemma \ref{a1} (i), we may replace $m_N^\eta(x)$ by 
$\widetilde{m}_{N, L}^\eta(x):= \sum\limits_{y\in \Lambda_L(x)} 
G_N(x, y) \eta(y)$ for fixed $L$ large enough. 
Set $\widetilde{\Lambda}_{L}:= [0, 4L-1]^d\cap \mathbb{Z}^d$ 
and $\Lambda_{N}^L(z):= \Lambda_{N}\cap 
(z+4L\mathbb{Z}^d)$. Then, $\Lambda_{N}$ can be 
decomposed as $\Lambda_{N}= 
\bigcup\limits_{z\in \widetilde{\Lambda}_{L}}
\Lambda_{N}^L(z)$ and we have 
\begin{align*}
& \mathbb{P}\bigl( 
|\{x\in \Lambda_{N}; \widetilde{m}_{N, L}^\eta(x) 
\geq (b \log N)^{\frac{1}{\alpha}}\}| 
\geq N^{d-Kb+\vare} \bigr) \\
& \quad \leq 
\sum\limits_{z\in \widetilde{\Lambda}_{L}}
\mathbb{P}\bigl( 
|\{x\in \Lambda_{N}^L(z) ; 
\widetilde{m}_{N, L}^\eta(x) \geq 
(b \log N)^{\frac{1}{\alpha}}\}| 
\geq \frac{1}{|\widetilde{\Lambda}_{L}|} N^{d-Kb+\vare} \bigr).
\end{align*}
For each $z \in \widetilde{\Lambda}_{L}$, 
$\{\widetilde{m}_{N, L}^\eta(x); x \in 
\Lambda_{N}^L(z)\}$ is a family of independent random variables. 
We recall Bernstein's concentration inequality (cf. \cite{bib2}). 
Let $\{X_i\}_{i\geq 1}$ be a family of independent random variables 
which satisfies $|X_i| \leq b$ a.s. for $b>0$ 
and every $i \geq 1$. 
Define $S_n=\sum\limits_{i=1}^n (X_i-E[X_i])$ and 
$v_n= \sum\limits_{i=1}^n E[X_i^2]$. 
Then, for every $t>0$, it holds that 
$P(S_n \geq t ) 
\leq 
\exp \Bigl\{
-\frac{t^2}{2v_n + \frac{2}{3}bt}
\Bigr\}$. 
By using this inequality with Lemma \ref{a1}, we obtain 
\begin{align*}
\mathbb{P}\bigl( 
|\{x\in \Lambda_{N}^L(z) ; 
\widetilde{m}_{N, L}^\eta(x) \geq 
(b \log N)^{\frac{1}{\alpha}}\}| 
\geq \frac{1}{|\widetilde{\Lambda}_{L}|} N^{d-Kb+\vare} \bigr)
\leq \exp\{ -N^{d-Kb+\frac{1}{2}\vare}\}, 
\end{align*}
for every $N$ large enough. 
The right hand side is summable in $N\geq 1$ and 
we obtain (\ref{151}) by Borel-Cantelli's lemma. 

Next, by the union bound and Lemma \ref{a1}, 
\begin{align*}
\mathbb{P}\bigl(\max_{x\in \Lambda_N}\widetilde{m}_{N, L}^\eta(x) 
\geq \bigl(\frac{d+1+\vare}{K}\log N\bigr)^{\frac{1}{\alpha}} \bigr) 
& \leq 
|\Lambda_N| \max_{x\in \Lambda_N} 
\mathbb{P}\bigl(\widetilde{m}_{N, L}^\eta(x) \geq 
\bigl(\frac{d+1+\vare}{K}\log N\bigr)^{\frac{1}{\alpha}}\bigr) \\
& \leq 
N^{-1-\frac{1}{2}\vare}, 
\end{align*}
for every $N$ large enough and this yields (\ref{152}). 
Finally, (ii) follows from Lemma \ref{a1} (iii) 
and Proposition \ref{gfunc}. 
\end{proof}

\smallskip

\begin{proof}[Proof of $(\ref{maxgub})$ and $(\ref{maxgub2})$ 
for the case $(A_\alpha)$\ $\alpha =2$]
Let $b>\sqrt{(2G^*+\frac{1}{K})d}$ 
where $K= \frac{c_2}{G_{(2)}^*}$. 
We consider the same discretization of the mean height 
as the proof of Proposition 2.1 of \cite{bib3}. 
For $\vare>0$ small enough and 
$\overline{j}\in \mathbb{N}$ large enough, 
set $\theta = \sqrt{\frac{d}{K}}\frac{1}
{\overline{j}}$ and 
$\widetilde{j}= \frac{1}{\theta}\sqrt{\frac{d+1+\vare}{K}}$. 
For given $m_N^\eta(x)$, define 
\begin{equation}\label{disc}
\overline{m}(x)= 
\begin{cases}
\theta \sqrt{\log N} & \text{ if } m_N^\eta(x) 
\in (-\infty, \theta \sqrt{\log N}], \\
j \theta \sqrt{\log N} & \text{ if } m_N^\eta(x) 
\in ( (j-1)\theta \sqrt{\log N}, j \theta \sqrt{\log N}],\ 
j =2, 3, \cdots, \overline{j}, \\
\widetilde{j} \theta \sqrt{\log N} & \text{ if } m_N^\eta(x) 
\in (\overline{j}\theta \sqrt{\log N}, 
\widetilde{j} \theta \sqrt{\log N}], \\ 
\infty & \text{ if } m_N^\eta(x) 
\in (\widetilde{j} \theta \sqrt{\log N}, \infty). 
\end{cases}
\end{equation}
We also define random sets $\mathcal{D}_N(j) = 
\{x \in \Lambda_{N}; \overline{m}(x) = 
j \theta \sqrt{\log N}\}$, $j\in \{1, 2, \cdots, \overline{j}, 
\widetilde{j}, \infty\}$ and consider the event: 
\begin{align*}
& \mathcal{A}_N = \bigl\{
|\mathcal{D}_N(j)| \leq N^{d-K(j-1)^2\theta^2+\vare} 
\text{ for every } j \in \{2, 3, \cdots, \overline{j}\}, \\
& \qquad \qquad \qquad \qquad \qquad \qquad \qquad 
|\mathcal{D}_N(\widetilde{j})| \leq N^{\vare} 
\text{ and } |\mathcal{D}_N(\infty)| =0\bigr\}. 
\end{align*}
Then, $\mathbb{P}(\mathcal{A}_N, 
\text{ for all large } N )=1$ by Lemma \ref{mht} 
and trivially $|\mathcal{D}_N(1)| \leq |\Lambda_{N}|$. 
We fix such a realization $\eta$. 
By FKG inequality, 
\begin{align*}
\mu_N^{\eta}\bigl(
\max_{x\in \Lambda_N}\phi(x) \leq b \sqrt{\log N} \bigr) 
& = 
\mu_N
\bigl(
\max_{x\in \Lambda_N}\{\phi(x) + m_N^\eta(x)\} 
\leq b \sqrt{\log N} \bigr) \\
& \geq 
\prod_{x\in \Lambda_N} \mu_N \bigl(\phi(x) \leq 
b \sqrt{\log N}-m_N^\eta(x)\bigr) \\
& \geq 
\exp\Bigl\{ \sum\limits_{x\in \Lambda_N} \log 
\bigl(1-
\mu_N\bigl( \phi(x) \geq 
b \sqrt{\log N}-\overline{m}(x)\bigr) \bigr)\Bigr\}. 
\end{align*}
Firstly, we prove (\ref{maxgub2}) under the condition 
$2dG^* > \frac{1}{K}$.
In this case, $b > \widetilde{j}\theta$ for 
$\vare>0$ small enough and we have 
\begin{align*}
T_N & := 
\sum\limits_{x\in \Lambda_N} \log 
\bigl(1-\mu_N\bigl(\phi(x) \geq 
b \sqrt{\log N}-\overline{m}(x)\bigr)\bigr) \\
& \geq 
\sum\limits_{j \in \{1, 2, \cdots, \overline{j}, \widetilde{j}\}}
|\mathcal{D}_N(j)| \log 
\bigl(1-\exp\bigl\{-\frac{1}{2G^*}(b -j \theta)^2
 \log N\bigr\}\bigr) \\
& \geq 
-2 \sum\limits_{j \in \{1, 2, \cdots, \overline{j}, \widetilde{j}\}}
|\mathcal{D}_N(j)| 
N^{-\frac{1}{2G^*}(b-j\theta)^2}, 
\end{align*}
for every $N$ large enough where we used Gaussian tail estimate 
and $\log(1-x) \geq -2x$ for every $x>0$ small enough. 
By definition of $\mathcal{A}_N$, in order to prove that the 
right hand side goes to 0 as $N\to \infty$ it 
is sufficient to show that 
\begin{equation*}
\max\limits_{j \in \{1, 2, \cdots, \overline{j}\}}
\bigl\{ d-K(j-1)^2 \theta^2 
-\frac{1}{2G^*}(b-j\theta)^2\bigr\}<0, 
\end{equation*}
and this inequality holds once we have 
\begin{equation}\label{601}
\min\limits_{0 \leq x \leq \sqrt{\frac{d}{K}}} 
\bigl\{ K x^2 + \frac{1}{2G^*}(b -x -\theta)^2\bigr\}>d. 
\end{equation}
The left hand side of (\ref{601}) equals to 
$\frac{K(b-\theta)^2}{2KG^*+1}$. 
Since $b>\sqrt{(2G^*+\frac{1}{K})d}$, if we take 
$\overline{j}$ large enough, then (\ref{601}) holds 
and we complete the proof of (\ref{maxgub2}). 

Next, we prove (\ref{maxgub}) for the general case. 
We take $\widetilde{j}$ as 
$\widetilde{j}= \frac{1}{\theta}\sqrt{\frac{d+\vare}{K}}$ 
in the discretization of $m_N^\eta$ (\ref{disc}). 
If $b >\sqrt{(2G^*+\frac{1}{K})d}$ then we can 
take $\vare>0$ small enough so that $b >\widetilde{j}\theta$. 
By Lemma \ref{a1}, 
\begin{align*}
\mathbb{P}(\max_{x\in \Lambda_N}m_N^\eta(x) \geq 
\widetilde{j}\theta \sqrt{\log N}) 
& \leq 
|\Lambda_N| \max_{x\in \Lambda_N} 
\mathbb{P} (m_N^\eta(x) \geq 
\sqrt{\frac{d+\vare}{K}}\sqrt{\log N}) \to  0,
\end{align*}
as $N\to \infty$. 
Therefore, 
we have $\lim\limits_{N\to \infty}\mathbb{P}(\mathcal{B}_N)=1$ 
where $\mathcal{B}_N$ is defined by the following: 
\begin{align*}
& \mathcal{B}_N = \bigl\{
|\mathcal{D}_N(j)| \leq N^{d-K(j-1)^2\theta^2+\vare} 
\text{ for every } j \in \{2, 3, \cdots, \overline{j}\} \\
& \qquad \qquad \qquad \qquad 
\text{ and }
|\mathcal{D}_N(\widetilde{j})| \leq N^{\vare}, 
\text{ for all large } N 
\bigr\} 
\cap 
\bigl\{ \max\limits_{x\in \Lambda_N}m_N^\eta(x) \leq 
\widetilde{j}\theta \sqrt{\log N}\bigr\}. 
\end{align*}
On $\mathcal{B}_N$ we can proceed the same argument as above and 
we have 
$
\mu_N^{\eta}\bigl(
\max\limits_{x\in \Lambda_N}\phi(x) \leq b \sqrt{\log N} \bigr) 
\geq 1-\delta_N$ for some $\delta_N \to 0$. 
Hence we obtain (\ref{maxgub}). 
\end{proof}

\smallskip

\begin{proof}[Proof of $(\ref{maxgub})$ for the case 
$(A_\alpha)$\ $\alpha \in (0, 2)$]
Assume the condition $(A_\alpha)$\ $\alpha \in (0, 2)$ and 
take $b >\frac{d}{K}$. 
In this case the maximum of $\phi$-field under 
$\mu_N^{\eta}$ is dominated by the maximum of $m_N^\eta$-field. 
By the union bound and Lemma \ref{a1}, 
$
\mathbb{P}\bigl(\max\limits_{x\in \Lambda_N}m_N^\eta(x) \geq 
\bigl(\frac{d+\vare}{K}\log N\bigr)^{\frac{1}{\alpha}} \bigr) 
\to  0$
as $N\to \infty$ for every $\vare>0$. 
We take $\vare>0$ small enough so that 
$b >\frac{d+\vare}{K}$. 
Then, on the event 
$\{ \max\limits_{x\in \Lambda_N}m_N^\eta(x) \leq 
\bigl(\frac{d+\vare}{K}\log N\bigr)^{\frac{1}{\alpha}} \}$, 
we have 
\begin{align*}
\mu_N^{\eta}
\bigl(
\max_{x\in \Lambda_N}\phi(x) \leq (b {\log N})^{\frac{1}{\alpha}} \bigr) 
& \geq 
\prod_{x\in \Lambda_N} \mu_N \bigl(\phi(x) + m_N^\eta(x) \leq 
(b{\log N})^{\frac{1}{\alpha}} \bigr)\\
& \geq 
\Bigl(1- 
\exp\bigl\{-\frac{1}{2G^*}\bigl( 
(b {\log N})^{\frac{1}{\alpha}} - 
\bigl(\frac{d+\vare}{K}\log N\bigr)^{\frac{1}{\alpha}}\bigr)^2\bigr\}
\Bigr)^{CN^d}\\
& \to 1, 
\end{align*}
as $N\to \infty$. 
\end{proof}

\smallskip

\begin{proof}[Proof of $(\ref{maxgub2})$ for the case 
$(\widetilde{A})$]
By Lemma \ref{mht} (ii) we fix a realization 
$\eta \in \{ \max\limits_{x\in \Lambda_N}m^\eta_N(x) \leq 
\vare \sqrt{\log N}, \text{ for all large } N \}$. 
Then, $\max\limits_{x\in \Lambda_N}m^\eta_N(x)$ 
does not affect the maximum of the $\phi$-field and 
 (\ref{maxgub2}) easily follows from FKG inequality and Gaussian tail 
estimate. 
\end{proof}
\begin{remark}\label{rem1}
As seen in this proof, under the condition $(A_\alpha)$ 
the typical maximum of the mean height 
is given by $\max\limits_{x\in \Lambda_N} m_N^\eta(x) 
\approx ({\frac{d}{K}\log N})^{\frac{1}{\alpha}}$. 
On the other hand, as a $\mathbb{P}$-almost sure result 
we are only able to prove that 
$\mathbb{P}\bigl( 
\max\limits_{x\in \Lambda_N} m_N^\eta(x) 
\leq (\frac{d+1+\vare}{K}\log N)^{\frac{1}{\alpha}}, 
\text{ for all large } N \bigr)=1$ 
by using Borel-Cantelli's lemma. 
The probability of the event $\bigl\{  
\max\limits_{x\in \Lambda_N}m_N^\eta(x) \in 
\bigl[(\frac{d+\vare}{K}\log N)^{\frac{1}{\alpha}}, 
(\frac{d+1+\vare}{K}\log N)^{\frac{1}{\alpha}}
\bigr]\bigr\}$ goes to $0$ as $N\to \infty$ but 
this is not negligible and the maximum of the $\phi$-field 
can be large on this event. 
For this reason the upper bound of the maximum 
is given in the form of $(\ref{maxgub})$ 
when $\alpha \in (0, 2)$ or, $\alpha=2$ and 
$2dG^* \leq \frac{G_{(2)}^*}{c_2}$. 
\end{remark}


\renewcommand{\thesection}{\Alph{section}}
\setcounter{section}{0}
\section{Appendix}
\subsection{The infinite volume case}
We consider $\max\limits_{x\in \Lambda_N}\phi(x)$ 
under the infinite volume Gaussian Gibbs measure 
with disorder. 
Recall that the Gibbs measure $\mu_N^\eta$ given by 
(\ref{gibbs}) coincides with the law of the 
Gaussian field on $\mathbb{R}^{\Lambda_N}$ 
with the mean 
$m_N^\eta(x)= \sum\limits_{y \in \Lambda_N}G_N(x, y)\eta(y)$, 
$x\in \Lambda_N$  and the covariance matrix 
$(-\Delta_N)^{-1}(x, y)$, 
$x, y\in \Lambda_N$. 
Let $\mu_\infty^\eta$ be 
the law of the 
Gaussian field on $\mathbb{R}^{\mathbb{Z}^d}$ 
with the mean 
$m_\infty^\eta(x)= \sum\limits_{y \in \mathbb{Z}^d}
G(x, y)\eta(y)$, 
$x\in \mathbb{Z}^d$  and the covariance matrix 
$(-\Delta)^{-1}(x, y)$, $x, y\in \mathbb{Z}^d$. 
We first show that 
$\mu_\infty^\eta$ exists and 
$\mu_N^\eta$ weakly converges to $\mu_\infty^\eta$ 
when $d\geq 5$. 
\begin{prop}\label{conv}
Let $d\geq 5$ and 
$\eta=\{\eta(x)\}_{x\in \mathbb{Z}^d}$ be a family of i.i.d. 
random variables with mean $0$ and variance 
$\sigma^2 \in (0, \infty)$. 
Then, for $\mathbb{P}$-almost every $\eta$, 
$\mu_N^\eta$ weakly converges to $\mu_\infty^\eta$ 
as $N\to \infty$. 
\end{prop}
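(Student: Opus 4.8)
The plan is to prove convergence of every finite-dimensional marginal of $\mu_N^\eta$ (extended to $\mathbb R^{\mathbb Z^d}$ by declaring $\phi\equiv 0$ off $\Lambda_N$), which suffices for weak convergence on the Polish space $\mathbb R^{\mathbb Z^d}$. Both $\mu_N^\eta$ and $\mu_\infty^\eta$ are Gaussian, so for a finite $\Lambda\subset\mathbb Z^d$ the marginal on $\mathbb R^\Lambda$ is the Gaussian with mean $(m_N^\eta(x))_{x\in\Lambda}$ (resp.\ $(m_\infty^\eta(x))_{x\in\Lambda}$) and covariance $(G_N(x,y))_{x,y\in\Lambda}$ (resp.\ $(G(x,y))_{x,y\in\Lambda}$). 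Thus it is enough to check, for fixed $x,y$, that $G_N(x,y)\to G(x,y)$ and, $\mathbb P$-a.s., that $m_N^\eta(x)\to m_\infty^\eta(x)$. The first is the monotone convergence $G_N(x,y)\uparrow G(x,y)$ recalled in Section~2. For the second, note that since $d\ge 5$, Proposition~\ref{gfunc} gives $\sum_y G(x,y)^2<\infty$, so $m_\infty^\eta(x)=\sum_y G(x,y)\eta(y)$ is a well-defined limit, $\mathbb P$-a.s.\ and in $L^2$ (sum of independent mean-zero $L^2$ terms); working on that full-measure event and taking at the end a countable intersection over $x\in\mathbb Z^d$, the problem reduces to $m_N^\eta(x)\to m_\infty^\eta(x)$ $\mathbb P$-a.s.\ for a single $x$.

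Set $\delta_N(x,y):=G(x,y)-G_N(x,y)\ge 0$, with $G_N(x,y):=0$ for $y\notin\Lambda_N$, so that $m_\infty^\eta(x)-m_N^\eta(x)=\sum_y\delta_N(x,y)\eta(y)$, a sum of independent mean-zero variables of total variance $\sigma^2 s_N$, $s_N:=\sum_y\delta_N(x,y)^2$. The key input is
\[
\delta_N(x,y)\ \le\ C N^{2-d}\ \ (y\in\Lambda_N),\qquad \delta_N(x,y)=G(x,y)\le C|x-y|^{2-d}\ \ (y\notin\Lambda_N),
\]
valid for $N$ large: the first follows from $\delta_N(x,y)=E_x[G(S_{\tau_{\Lambda_N}},y)]$ and a harmonic-measure estimate on $\partial^+\Lambda_N$ (equivalently from the boundary asymptotics of $(-\Delta_N)^{-1}$ quoted in Section~1). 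Summing over $y$ gives $s_N\le C N^{4-d}$. For $d\ge 6$ this is summable, so Chebyshev's inequality and Borel--Cantelli already give $m_N^\eta(x)\to m_\infty^\eta(x)$ $\mathbb P$-a.s. The delicate case is $d=5$, where $s_N\asymp N^{-1}$ is borderline non-summable and, since only a second moment of $\eta$ is assumed, no higher-moment bound is available.

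To handle all $d\ge 5$ uniformly I would pass to the geometric subsequence $N=2^j$: since $\sum_j s_{2^j}\le C\sum_j 2^{j(4-d)}<\infty$, Chebyshev and Borel--Cantelli give $m_{2^j}^\eta(x)\to m_\infty^\eta(x)$ $\mathbb P$-a.s., and it remains to control the oscillation over $2^j\le N<2^{j+1}$. One has $m_N^\eta(x)-m_{2^j}^\eta(x)=\sum_y(G_N(x,y)-G_{2^j}(x,y))\eta(y)$ with $0\le G_N(x,y)-G_{2^j}(x,y)\le\delta_{2^j}(x,y)\le C 2^{j(2-d)}$ for every $y$, so a layer-cake decomposition gives
\[
m_N^\eta(x)-m_{2^j}^\eta(x)=\int_0^\infty\Psi_N(t)\,dt,\qquad \Psi_N(t):=\sum_{y\in A_N(t)}\eta(y),\quad A_N(t):=\{y:\,G_N(x,y)-G_{2^j}(x,y)>t\}.
\]
For fixed $t$ the sets $A_N(t)$ increase in $N$, so $(\Psi_N(t))_{2^j\le N<2^{j+1}}$ is a sum of i.i.d.\ mean-zero variables along an increasing index set, i.e.\ a martingale, and Doob's $L^2$ inequality gives $\mathbb E[\max_N\Psi_N(t)^2]\le 4\sigma^2|A_{2^{j+1}}(t)|$. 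Since $A_{2^{j+1}}(t)=\emptyset$ for $t>C2^{j(2-d)}$ and $|A_{2^{j+1}}(t)|\le|\Lambda_{2^{j+1}}|\le C2^{jd}$ otherwise,
\[
\mathbb E\Bigl[\max_{2^j\le N<2^{j+1}}\bigl|m_N^\eta(x)-m_{2^j}^\eta(x)\bigr|\Bigr]\le 2\sigma\int_0^\infty\sqrt{|A_{2^{j+1}}(t)|}\,dt\le C 2^{j(2-d/2)},
\]
summable in $j$ precisely because $d>4$. Markov's inequality and Borel--Cantelli then force $\max_{2^j\le N<2^{j+1}}|m_N^\eta(x)-m_{2^j}^\eta(x)|\to 0$ $\mathbb P$-a.s., and together with the subsequential limit this yields $m_N^\eta(x)\to m_\infty^\eta(x)$ $\mathbb P$-a.s.

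The hard part is exactly the endpoint $d=5$: $\mathrm{Var}_{\mathbb P}(m_\infty^\eta(x)-m_N^\eta(x))\asymp N^{-1}$ is just barely non-summable, so naive Borel--Cantelli on the full sequence fails, and with only a second moment of $\eta$ one cannot fall back on higher moments; the point of the layer-cake/Doob step is to trade this for the monotonicity $G_N(x,\cdot)\uparrow$, which turns the gap increments into genuine partial sums of i.i.d.\ variables. The remaining ingredient, the uniform Green-function difference bound $\delta_N(x,y)\le C N^{2-d}$ on $\Lambda_N$, is routine from the random-walk representation and standard boundary estimates.
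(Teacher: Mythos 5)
Your argument tracks the paper's up to the identical variance bound $\mathbb{E}[(m_\infty^\eta(x)-m_N^\eta(x))^2]\leq CN^{4-d}$, obtained by exactly the same two-piece split (tail outside $\Lambda_N$ plus the Green-function difference $G-G_N$ inside), so that part is essentially the same, and the reduction of weak convergence to convergence of means and covariances is also the one the paper uses. Where you genuinely diverge is the passage from $L^2$ convergence to $\mathbb{P}$-almost-sure convergence of the means. The paper cites L\'evy's theorem to make this jump; that is terse, and not immediately applicable in the usual form, because $m_N^\eta(x)$ is not a sequence of partial sums of a fixed independent series (the weights $G_N(x,y)$ change with $N$), and at $d=5$ the variance bound $\asymp N^{-1}$ is on the wrong side of summability for a naive Borel--Cantelli on the full sequence. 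Your route makes this step fully explicit: Borel--Cantelli along $N=2^j$ (using $\sum_j 2^{j(4-d)}<\infty$), then control of the oscillation on the dyadic block $[2^j,2^{j+1})$ via the layer-cake decomposition of $m_N^\eta(x)-m_{2^j}^\eta(x)$ combined with Doob's $L^2$ maximal inequality, which works precisely because $N\mapsto G_N(x,\cdot)$ is nondecreasing, so that the super-level sets $A_N(t)$ are nested and the layer sums $\Psi_N(t)$ become genuine martingales with finitely many terms; the resulting bound $C2^{j(2-d/2)}$ on the expected block oscillation is summable exactly when $d>4$. This is correct and self-contained, and buys a rigorous treatment of the borderline dimension at the cost of a longer argument than the paper's one-line L\'evy step. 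The remaining ingredient, $0\leq G(x,y)-G_N(x,y)\leq CN^{2-d}$ for $y\in\Lambda_N$ and $N$ large, is exactly what the paper establishes via the strong Markov representation $G(x,y)-G_N(x,y)=\sum_{z\in\partial^+\Lambda_N}P_x(S_{\tau_{\Lambda_N}}=z)G(z,y)$ together with the decay of $G$ near the boundary, so your treatment of that as routine is justified.
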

\begin{proof}
First of all, we remark that 
\begin{align*}
\mathbb{E}[(m_\infty^\eta(x))^2] 
& = \sigma^2 \sum\limits_{y \in \mathbb{Z}^d}G(x, y)^2 
=\sigma^2 (-\Delta)^{-2}(0, 0) <\infty,
\end{align*}
for every $x\in \mathbb{Z}^d$ when $d\geq 5$. 
Hence, for $\mathbb{P}$-almost every realization 
$\eta$, 
$|m_\infty^\eta(x)|<\infty$ for every $x\in \mathbb{Z}^d$ 
and the Gaussian measure $\mu_\infty^\eta$ exists 
when $d\geq 5$. 
Then, it is sufficient to show the convergence 
of its mean and variance
since $\mu_N^\eta$ is a Gaussian measure. 
The convergence of the variance 
$(-\Delta_N)^{-1}(x, y) \to 
(-\Delta)^{-1}(x, y),\, x, y\in \mathbb{Z}^d$ 
holds when $d\geq 3$. 
For the convergence of the mean 
let $x\in \mathbb{Z}^d$ be fixed 
and we compute that
\begin{align*}
\mathbb{E}\bigl[
(m_\infty^\eta(x)-m_N^\eta(x))^2
\bigr] 
& = 
\mathbb{E}\Bigl[
\Bigl\{ \sum\limits_{y \in \Lambda_N^c}G(x, y)\eta(y) + 
\sum\limits_{y \in \Lambda_N}
\bigl(G(x, y)-G_N(x, y)\bigr) \eta(y)\Bigr\}^2\Bigr] \\
& \leq 
2\mathbb{E}\Bigl[
\Bigl\{ \sum\limits_{y \in \Lambda_N^c}G(x, y)\eta(y)\Bigr\}^2 
\Bigr] + 
2\mathbb{E}\Bigl[
\Bigl\{\sum\limits_{y \in \Lambda_N}
\bigl(G(x, y)-G_N(x, y)\bigr) \eta(y)\Bigr\}^2 \Bigr] \\
& =: I_N^1 + I_N^2, 
\end{align*}
where the inequality follows from 
$(a+b)^2 \leq 2a^2+ 2b^2$ for every $a, b\in \mathbb{R}$. 
By the assumption on $\eta$ and Proposition \ref{gfunc}, 
\begin{align*}
I_N^1 & 
= 2 \sigma^2 \sum\limits_{y \in \Lambda_N^c} G(x, y)^2 
\leq C\sum\limits_{r \geq N} r^{d-1}(\frac{1}{r^{d-2}})^2 
\leq C N^{-d+4}. 
\end{align*}
For the estimate on $I_N^2$ we 
show that there exists a constant $C>0$ such that 
$0\leq G(x, y)-G_N(x, y)\leq CN^{-d+2}$ 
for every $y\in \Lambda_N$ and every $N$ large enough. 
Take $\vare \in (0, 1)$. 
For $y\in \Lambda_N \cap \Lambda_{\vare N}^c$, 
we have a crude bound: 
\begin{align*}
0 \leq G(x, y)-G_N(x, y) \leq 
G(x, y) \leq \frac{C}{|x-y|^{d-2}} \leq CN^{-d+2}. 
\end{align*}
For $y\in \Lambda_{\vare N}$, 
\begin{align*}
0 \leq G(x, y)-G_N(x, y) = 
\sum\limits_{z \in \partial^+\Lambda_N}
P_x(S_{\tau_{\Lambda_N}} =z)G(z, y) 
\leq CN^{-d+2}, 
\end{align*}
where the equality follows from 
\cite[Proposition 1.5.8]{L} and the last 
inequality follows from 
$G(z, y) \leq \frac{C}{|z-y|^{d-2}} \leq 
CN^{-d+2}$ for every 
$y\in \Lambda_{\vare N}$ and 
$z\in \partial^+\Lambda_N$. 
Therefore, we have 
\begin{align*}
I_N^2 & 
= 2 \sigma^2 
\sum\limits_{y \in \Lambda_N}
\bigl(G(x, y)-G_N(x, y)\bigr)^2
\leq CN^d({N^{-d+2}})^2 
\leq C N^{-d+4}. 
\end{align*}
By collecting all the estimates, 
$m_N^\eta(x)$ converges to 
$m_\infty^\eta(x)$ in $L^2$ and hence in probability. 
Then, by L\'evy's theorem (cf. \cite[Theorem 3.9]{Var}) 
$m_N^\eta(x)$ converges to 
$m_\infty^\eta(x)$ $\mathbb{P}$-a.s. 
and this completes the proof. 
\end{proof}
\begin{remark}
$\mu_N^\eta$ does not converge when $d\leq 4$. 
Non-existence of infinite volume 
Gaussian Gibbs measures in random external fields 
has been discussed in 
\cite[Appendix A.1]{CK12}. 
\end{remark}
By Proposition \ref{conv} and a standard argument 
(cf. \cite[Proposition 3.4]{F05} for example) 
it is easy to see 
that $\mu_\infty^\eta$ is an infinite volume 
(random) $\phi$-Gibbs measure, namely 
for $\mathbb{P}$-almost every realization $\eta$, 
$\mu_\infty^\eta$ satisfies the DLR equation: 
$$
\int_{\mathbb{R}^{\mathbb{Z}^d}} 
f(\phi) \mu_\infty^\eta(d\phi)
= 
\int_{\mathbb{R}^{\mathbb{Z}^d}}  \Bigl(\int_{\mathbb{R}^{\Lambda}} 
f(\phi) \mu_\Lambda^{\eta, \psi}(d\phi)
\Bigr)
\mu_\infty^\eta(d\psi), 
$$
for every finite $\Lambda \subset \mathbb{Z}^d$ 
and for all $f\in C_b(\mathbb{R}^{\mathbb{Z}^d})$ 
where $\mu_\Lambda^{\eta, \psi}$ denotes the 
finite volume Gibbs measure on $\mathbb{R}^\Lambda$ 
with $\psi$-boundary condition outside $\Lambda$ 
defined by the following. 
\begin{align*}
\mu_\Lambda^{\eta, \psi}(d\phi) 
=\frac{1}{Z_\Lambda^{\eta, \psi}}
\exp\Bigl\{
-\frac{1}{4d}\sum_{\substack{\{x, y\}\cap \Lambda\ne\phi \\ |x-y|=1}}
(\phi(x)-\phi(y))^2
 + \sum\limits_{x\in \Lambda}\eta(x)\phi(x) \Bigr\}
\prod\limits_{x\in \Lambda}d\phi(x)
\prod\limits_{x\in \mathbb{Z}^d\setminus \Lambda}
\delta_{\psi(x)}(d\phi(x)). 
\end{align*}

Then, the completely same statement to 
Theorem \ref{thm1} holds for $\max\limits_{x\in \Lambda_N}\phi(x)$ 
under the measure $\mu_\infty^\eta$. 
The reason is that we can show the similar estimates 
as Lemma \ref{a1} for 
the infinite weighted sum 
$m_\infty^\eta(x)= \sum\limits_{y \in \mathbb{Z}^d}
G(x, y)\eta(y)$, $x\in\mathbb{Z}^d$ 
by using the fact that 
$\sum\limits_{y \in \Lambda_N^c}G(x, y)\eta(y)$ goes 
to $0$ as $N\to \infty$ for every $x\in \mathbb{Z}^d$, $\mathbb{P}$-a.s. 
which follows from the proof of Proposition \ref{conv}. 
The argument in Section 3 also works well 
for $\mu_\infty^\eta$
by using the DLR equation. 
All the modifications are straightforward.

\subsection{The lower dimensional case}
In the lower dimensional case 
$1\leq d\leq 4$, 
fluctuation of the mean height $m_N^\eta$ is larger than 
that of the original $\phi$-field and 
$m_N^\eta$-field has long range correlations. 
By these reasons the quenched result such as Theorem \ref{thm1} 
is not expected. 
We have the following example. 
Let $1\leq d \leq 3$. 
To avoid the constraint by $0$-boundary conditions, 
we consider $\max\limits_{x\in \Lambda_{(1-\vare)N}}\phi(x)$, 
$\vare\in (0, 1)$ 
instead of $\max\limits_{x\in \Lambda_{N}}\phi(x)$. 
We take 
$\eta = \{\eta(x)\}_{x\in \mathbb{Z}^d}$ as a family of i.i.d. 
standard normal random variables. 
In this case 
$m_N^{\eta}= \{ m_N^{\eta}(x)\}_{x\in \Lambda_N}$ 
is a centered Gaussian field on $\mathbb{R}^{\Lambda_N}$ whose 
covariance matrix is given by $(-\Delta_N)^{-2}$. 
This is so-called Gaussian membrane model and 
its scaling limit has been studied in 
\cite{bib7} and \cite{CDH19}. 
Let $\Psi$ be a scaled field define by 
$\Psi_N(t)=\frac{1}{N^{2-\frac{d}{2}}}m_N^\eta(Nt)$ for 
$t \in \frac{1}{N}\mathbb{Z}^d$ combined with some 
continuous interpolation. 
Then, the field $\{ \Psi_N(t) \}_{t \in [-1, 1]^d}$ converges 
in distribution to some centered continuous Gaussian process 
on $\mathbb{R}^{[-1, 1]^d}$ in the space of all 
continuous functions on $[-1, 1]^d$ 
(cf. \cite{CDH19} for $d=2, 3$ and \cite{bib7} for $d=1$). 
In particular, if we consider the events: 
\begin{align*}
\mathcal{A}_N^+& = \{m_N^{\eta}(x) \geq  \delta N^{2-\frac{d}{2}} 
\text{ for every } x\in \Lambda_{(1-\vare)N}\}, \\
\mathcal{A}_N^-& = \{m_N^{\eta}(x) \leq -\delta N^{2-\frac{d}{2}} 
\text{ for every } x\in \Lambda_{(1-\vare)N}\}, 
\end{align*}
for $\delta>0$, then there exist some constants $q_+, q_->0$ such that 
$\mathbb{P}(\mathcal{A}_N^+)\geq q_+$, 
$\mathbb{P}(\mathcal{A}_N^-)\geq q_-$ for every $N$ large 
enough.  

On the event $\mathcal{A}_N^+$, we have 
\begin{align*}
\mu_N^{\eta}\bigl( \max\limits_{x\in \Lambda_{(1-\vare)N}} 
\phi(x) \geq \frac{1}{2} \delta N^{2-\frac{d}{2}} \bigr) 
& \geq 
\mu_N^{\eta}\bigl(\phi(0) - m_N^{\eta}(0) \geq 
-\frac{1}{2} \delta N^{2-\frac{d}{2}} \bigr) \\
& \geq 
1-e^{-C\frac{\delta^2 N^{4-d}}{G_N(0, 0)}}
\to 1, 
\end{align*}
as $N\to \infty$. 
On the other hand, on the event $\mathcal{A}_N^-$, 
\begin{align*}
\mu_N^{\eta}\bigl( \max\limits_{x\in \Lambda_{(1-\vare)N}} 
\phi(x) \geq -\frac{1}{2}\delta N^{2-\frac{d}{2}} \bigr) 
& \leq 
\mu_N^{\eta}\bigl(\phi(x) - m_N^{\eta}(x) \geq 
\frac{1}{2}\delta N^{2-\frac{d}{2}} \text{ for some }x \in 
\Lambda_{(1-\vare)N} \bigr) \\
& \leq 
CN^d e^{-C\frac{\delta^2 N^{4-d}}{G_N(0, 0)}}
\to 0. 
\end{align*}
As a result, we have 
\begin{align*}
& \mathbb{P}\Bigl(
\lim\limits_{N\to \infty} 
\mu_N^{\eta}\bigl( \max\limits_{x\in \Lambda_{(1-\vare)N}} 
\phi(x) \geq \delta N^{2-\frac{d}{2}} \bigr) =1 
\Bigr) \geq q_+, 
\end{align*}
and 
\begin{align*}
& \mathbb{P}\Bigl(
\lim\limits_{N\to \infty} 
\mu_N^{\eta}\bigl( \max\limits_{x\in \Lambda_{(1-\vare)N}} 
\phi(x) \leq -\delta N^{2-\frac{d}{2}} \bigr) =1 
\Bigr) \geq q_-. 
\end{align*}
\subsection{Entropic repulsion}
As explained in the previous sections, 
the proof of Theorem \ref{thm1} is closely related to 
the problem of entropic repulsion. 
Consider the hard wall event 
$\Omega_{N, \vare}^+ =\bigl\{\phi \, ; \phi(x) \geq 0 
\text{ for every } x\in \Lambda_{(1-\vare)N} \bigr\}$, $\vare\in (0, 1)$. 
Under the condition that the law of $\eta$ is symmetric, we have 
\begin{align*}
\mu_N^{\eta}\bigl( \Omega_{N, \vare}^+ \bigr) 
& = 
\mu_N\bigl( \phi(x)+m_N^\eta(x) \geq 0 \text{ for every } 
x \in \Lambda_{(1-\vare)N} \bigr) \\
& \stackrel{d}{=} 
\mu_N\bigl( \phi(x) \geq m_N^\eta(x) \text{ for every } 
x \in \Lambda_{(1-\vare)N} \bigr), 
\end{align*}
where $\stackrel{d}{=}$ 
means equivalence in law as a $\sigma(\eta(x); x\in \mathbb{Z}^d)$ 
-measurable random variable. 
Therefore, the asymptotic behavior of 
$\mu_N^{\eta}\bigl( \Omega_{N, \vare}^+ \bigr)$ 
corresponds to the problem of entropic repulsion 
on the random wall $m_N^\eta=\{m_N^\eta(x)\}_{x\in \Lambda_N}$. 
If we take $L$ large enough, then by Lemma \ref{a1} (i)
the field of the mean height 
$\{m_N^\eta(y); y \in \Gamma_{N, \vare}\}$ 
can be treated as independent random variables where 
$\Gamma_{N, \vare}:= \Lambda_{(1-\vare)N}\cap 4L\mathbb{Z}^d$. 
Additionally, by Lemma \ref{a1} (ii) we know its precise tail asymptotics. 
By these facts we can proceed the similar argument to the proof of 
\cite{bib3} which studied the problem of entropic repulsion for 
the i.i.d. random wall case. 
(Actually, \cite{bib3} mentioned the case of the weakly correlated wall.) 
All the modifications are almost straightforward (but lengthy and messy). 
We introduce the result without the proof. 
\begin{prop}\label{thm2}
Let $d\geq 5$, $\vare \in (0, 1)$ 
and $\eta=\{\eta(x)\}_{x\in \mathbb{Z}^d}$ be a family of i.i.d. 
symmetric random variables. 
Assume the condition $(A_\alpha)$ or $(\widetilde{A})$.  
Then, the following holds for $\mathbb{P}$-$\mathrm{a.s.}$: 
\begin{equation*}
\lim_{N\to\infty}
\frac{1}{N^{d-2}(\log N)^{\frac{2}{\alpha \wedge 2}}}\log \mu_N^{\eta}
 \bigl( \Omega_{N, \vare}^+ \bigr) 
= - R^* \mathrm{Cap}((-1+\vare, 1-\vare)^d), 
\end{equation*}
where 
$\mathrm{Cap}((-1+\vare, 1-\vare)^d)$ is the Newtonian capacity of 
$(-1+\vare, 1-\vare)^d \subset \mathbb{R}^d$ defined by 
\begin{equation*}
\mathrm{Cap}((-1+\vare, 1-\vare)^d) = 
\inf\bigl\{ \frac{1}{2d}\| \partial f\|_{L^2}^2 ; 
f\in C_0^\infty(\mathbb{R}^d),\, f\geq 0,\, f\equiv 1 \ \mathrm{ on }\ 
(-1+\vare, 1-\vare)^d\bigr\}, 
\end{equation*}
and $R^*$ is given by the following: 
\begin{equation*}
R^* = 
\begin{cases}
\frac{1}{2}(\frac{2}{c_\alpha})^{\frac{2}{\alpha}} (G^*)^{2} 
& \text{ if } (A_\alpha)\ \alpha \in (0, 1], \\
\frac{1}{2}(\frac{2}{c_\alpha})^{\frac{2}{\alpha}} 
(G^*_{(\alpha)})^{\frac{2\alpha-2}{\alpha}} 
& \text{ if } (A_\alpha)\ \alpha \in (1, 2), \\
2G^* + \frac{{G}_{(2)}^*}{c_2}
& \text{ if } (A_\alpha)\ \alpha =2, \\
2G^* & \text{ if } (\widetilde{A}). 
\end{cases}
\end{equation*}
\end{prop}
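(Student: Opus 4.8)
The plan is to regard Proposition~\ref{thm2} as an entropic-repulsion statement for the free field over the \emph{random} wall $m_N^\eta$, and to run the coarse-graining and change-of-measure machinery of \cite{bib3} (i.i.d.\ wall) and \cite{bib4} (Gaussian wall) while tracking the precise constant. The starting point is the distributional identity of Appendix~A.3,
\begin{equation*}
\mu_N^{\eta}\bigl(\Omega_{N,\vare}^+\bigr)\stackrel{d}{=}\mu_N\bigl(\phi(x)\geq m_N^\eta(x)\ \text{for every}\ x\in\Lambda_{(1-\vare)N}\bigr),
\end{equation*}
which exhibits $\tfrac{1}{N^{d-2}(\log N)^{2/(\alpha\wedge2)}}\log\mu_N^\eta(\Omega_{N,\vare}^+)$ as the (random) rate of repulsion of the free field above the surface $\{m_N^\eta(x)\}$. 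The first step is to replace, by Lemma~\ref{a1}(i),(iii), the wall $m_N^\eta$ by the localized wall $\widetilde{m}_{N, L}^\eta(x)=\sum_{y\in\Lambda_L(x)}G_N(x,y)\eta(y)$: since the discrepancy is uniformly at most $(\delta\log N)^{1/(\alpha\wedge2)}$ and $\delta>0$ is arbitrary once $L\to\infty$, this perturbs the final constant only by $o(1)$, while the gain is that on the sublattice $4L\mathbb{Z}^d$ the variables $\widetilde{m}_{N, L}^\eta$ are independent, with the sharp deviation rates of Lemma~\ref{a1}(ii) and the high-point counts of Lemma~\ref{mht} at our disposal.

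For the lower bound I would use an explicit elevation. Fix $f\in C_0^\infty(\mathbb{R}^d)$ with $f\geq0$, $f\equiv1$ on $(-1+\vare, 1-\vare)^d$, $f$ superharmonic off that set, and $\tfrac1{2d}\|\partial f\|_{L^2}^2$ within $\vare$ of $\mathrm{Cap}((-1+\vare, 1-\vare)^d)$, and set $h_N(x)=\sqrt{2R^*}\,(\log N)^{1/(\alpha\wedge2)}f(x/N)$, so that $\tfrac12\langle h_N,(-\Delta_N)h_N\rangle=(1+o(1))R^*N^{d-2}(\log N)^{2/(\alpha\wedge2)}\mathrm{Cap}((-1+\vare, 1-\vare)^d)$. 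Writing $\phi=\psi+h_N$, the Cameron--Martin change of measure gives
\begin{equation*}
\mu_N\bigl(\phi\geq\widetilde{m}_{N, L}^\eta\ \text{on}\ \Lambda_{(1-\vare)N}\bigr)\ \geq\ e^{-\frac12\langle h_N,(-\Delta_N)h_N\rangle}\ \mu_N\bigl(\psi(x)\geq\widetilde{m}_{N, L}^\eta(x)-h_N(x)\ \text{for every}\ x\in\Lambda_{(1-\vare)N}\bigr),
\end{equation*}
the tilt being harmless because $(-\Delta_N)h_N\geq0$. On the event of Lemma~\ref{mht} (of full $\mathbb{P}$-measure for all large $N$) on which $m_N^\eta$ has only its typical exceedances, the remaining $\mu_N$-probability is $\exp\{-o(N^{d-2}(\log N)^{2/(\alpha\wedge2)})\}$: at the $\mathcal O(N^{d-2})$ sites where $\widetilde{m}_{N, L}^\eta(x)$ is comparable to $h_N(x)$ one uses $\mu_N(\psi(x)\geq0)=\tfrac12$; at the genuine exceedances one pays $\exp\{-\tfrac1{2G^*}\sum(\widetilde{m}_{N, L}^\eta(x)-h_N(x))^2\}$, and a layer-wise count via Lemma~\ref{mht} bounds $\sum(\widetilde{m}_{N, L}^\eta(x)-h_N(x))^2$ by $\mathcal O(N^{d-2}(\log N)^{2/(\alpha\wedge2)-2})$; at all other sites the single-site Gaussian cost is super-polynomial and a union bound suffices (replaced, for $\alpha=2$ and $(\widetilde A)$, by the classical sharp entropic-repulsion estimate, which still costs only $e^{-\Theta(N^{d-2})}$ because $\sqrt{2R^*}$ sits exactly at the critical amplitude). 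Letting $f$ be near-optimal and $L\to\infty$ yields the matching lower bound, $\mathbb{P}$-a.s.

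For the upper bound I would coarse-grain $\Lambda_{(1-\vare)N}$ into $L$-boxes centered on $\Gamma_N=\Lambda_{(1-\vare)N}\cap4L\mathbb{Z}^d$, condition the free field on the grid $D_L=\bigcup_{x\in\Gamma_N}\partial^+\Lambda_L(x)$, and use the spatial Markov property to write $\phi(x)=M_L^\phi(x)+\xi_x$ with $\xi_x:=\phi(x)-M_L^\phi(x)$ a family of (conditionally on $D_L$) i.i.d.\ centered Gaussians of variance $G_L^*$, independent of the $D_L$-measurable profile $\{M_L^\phi(x)\}$. On $\Omega_{N,\vare}^+$ one has $M_L^\phi(x)+\xi_x\geq\widetilde{m}_{N, L}^\eta(x)$ at every $x\in\Gamma_N$; integrating out $\xi$ and, through the quenched--annealed comparison of Section~3.1, also $\eta$, and combining the Gaussian tail with Lemma~\ref{a1}(ii), one bounds the coarse-grained probability by $\exp$ of minus a Gaussian large-deviation rate that forces the profile $\{M_L^\phi(x)\}$ to be elevated at scale $(\log N)^{1/(\alpha\wedge2)}$ on a positive density of boxes. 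Identifying this rate, via the discrete Dirichlet form $\tfrac12\langle\cdot,(-\Delta_N)\cdot\rangle$ and the usual capacity/$\Gamma$-convergence argument exactly as in \cite{bib3}, with $R^*N^{d-2}(\log N)^{2/(\alpha\wedge2)}\mathrm{Cap}((-1+\vare, 1-\vare)^d)$ --- the constant $R^*$ being precisely the optimal balance, governed by Lemma~\ref{a1}(ii) and the variance $G^*$, between the disorder's and the free field's contributions, in complete parallel with the emergence of $M^*$ in Theorem~\ref{thm1} --- completes the upper bound; Lemma~\ref{mht} controls the exceptionally high points of $m_N^\eta$ uniformly in $\eta$, $\mathbb{P}$-a.s.

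The main obstacle is not any single estimate but the simultaneous bookkeeping: one must disentangle the three competing contributions (wall tails, Gaussian fluctuations, elevation of the coarse profile) in both bounds, send $\delta\to0$ and $L\to\infty$ in the right order to recover the \emph{sharp} constant $R^*$, and do so $\mathbb{P}$-almost surely and uniformly over $\eta$ on the good events of Lemmas~\ref{a1} and \ref{mht}; the lack of spatial Markov property and the long-range correlations of $m_N^\eta$ are exactly what force the $L$-box reduction. A secondary but genuine difficulty is that for $\alpha=2$ (and $(\widetilde A)$) the residual free-field repulsion after the shift is not negligible by a crude union bound, so one must invoke the classical sharp entropic-repulsion estimates, which is where the factor $2$ in $R^*=2G^*$ originates. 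As the authors note, the argument is lengthy, but structurally it runs parallel to \cite{bib3}, \cite{bib4} and to the proof of Theorem~\ref{thm1}.
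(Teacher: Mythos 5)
The paper does not actually prove Proposition~A.3: the authors state only that the argument follows by adapting \cite{bib3} using Lemmas~\ref{a1} and~\ref{mht}, and that ``all the modifications are almost straightforward (but lengthy and messy).'' Your proposal follows exactly this route --- reduce to a repulsion problem above the random wall $m_N^\eta$, localize $m_N^\eta$ to $\widetilde m_{N,L}^\eta$ via Lemma~\ref{a1}(i),(iii) so that the wall becomes independent on the sublattice $4L\Z^d$, use the deviation rates of Lemma~\ref{a1}(ii) and the high-point counts of Lemma~\ref{mht}, and run the coarse-graining and capacity machinery of \cite{bib3}/\cite{bib4}. Your reading of the constant $R^*$ is also consistent: for each case the amplitude $\sqrt{2R^*}$ solves the balance between the rate $K$ from Lemma~\ref{a1}(ii) and the $\phi$-field variance $G^*$ (e.g.\ $\min_\gamma\{K\gamma^2+(a-\gamma)^2/(2G^*)\}=2$ gives $a^2/2=2G^*+1/K$ at $\alpha=2$, and $a=2\sqrt{G^*}$ at $(\widetilde A)$). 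So as a \emph{plan} your proposal matches what the paper indicates.

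One step in your lower bound, however, is stated incorrectly. After the change of variables $\phi=\psi+h_N$ one gets
\begin{equation*}
\mu_N\bigl(\phi\geq\widetilde m_{N,L}^\eta\ \text{on}\ \Lambda_{(1-\vare)N}\bigr)
= e^{-\frac12\langle h_N,(-\Delta_N)h_N\rangle}\,
E^{\mu_N}\bigl[\mathbb{1}_{\{\psi\geq\widetilde m_{N,L}^\eta - h_N\}}\,e^{-\langle\psi,(-\Delta_N)h_N\rangle}\bigr],
\end{equation*}
and the sign condition $(-\Delta_N)h_N\geq0$ does \emph{not} give $E^{\mu_N}[e^{-\langle\psi,(-\Delta_N)h_N\rangle}\mid\psi\geq\widetilde m_{N,L}^\eta-h_N]\geq1$. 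Quite the opposite: conditioning on the increasing event $\{\psi\geq\widetilde m_{N,L}^\eta-h_N\}$ pushes $E[\psi(x)\mid\cdot]\geq0$, so $E[\langle\psi,(-\Delta_N)h_N\rangle\mid\cdot]\geq0$ and Jensen goes the \emph{wrong} way. The tilt is in fact harmless for a different reason: the random variable $\langle\psi,(-\Delta_N)h_N\rangle$ is centred Gaussian with variance $\langle h_N,(-\Delta_N)h_N\rangle\asymp N^{d-2}(\log N)^{2/(\alpha\wedge2)}$, hence typically of order $\sqrt{N^{d-2}}(\log N)^{1/(\alpha\wedge2)}$, which is lower order than the main cost. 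The standard fix is to intersect the event with $\{\langle\psi,(-\Delta_N)h_N\rangle\leq\vare\langle h_N,(-\Delta_N)h_N\rangle\}$ (this complementary event has probability $e^{-c\vare^2 N^{d-2}(\log N)^{2/(\alpha\wedge2)}}$) and let $\vare\downarrow0$ afterwards; you also need $\sqrt{2R^*}$ replaced by $\sqrt{2R^*}+\vare'$ so that the FKG residual $\prod_x\mu_N(\psi(x)\geq\widetilde m_{N,L}^\eta(x)-h_N(x))$ is $e^{-O(N^{d-2})}$ rather than $e^{-N^{d-2+\delta}}$. With these corrections your lower bound goes through; the rest of your sketch is in line with the paper's suggested adaptation of \cite{bib3}.
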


\section*{Acknowledgement}
This work was partially supported 
by JSPS KAKENHI Grant Number 22K03359. 

\end{document}